\newcommand{\com}[1]{}
\def\thm@space@setup{%
  \thm@preskip=\parskip \thm@postskip=0pt
}
\newcommand{\email}[1]{\href{mailto:#1}{#1}}
\newcommand{\GRAD}{\matr\nabla}
\newcommand{\vGRAD}{\vec\nabla}
\newcommand{\GRADs}[1]{\GRAD_{\hskip-0.5ex {\mathrm s}}\hskip0.2ex#1}
\newcommand{\DIV}{\vec\nabla{\cdot}}
\newcommand{\eqbydef}{\mathrel{\mathop:}=}
\newcommand{\fedybqe}{=\mathrel{\mathop:}}
\newcommand{\restrto}[2]{#1{}_{|#2}}
\newcommand{\Id}[1][d]{\matr{I}_{#1}}
\newcommand{\del}{\partial}
\newcommand{\Real}{\mathbb{R}}
\newcommand{\Natural}{\mathbb{N}}
\newcommand{\IR}{\Real}
\newcommand{\IN}{\Natural}
\newcommand{\IP}[1][]{\mathbb{P}^{#1}}
\newcommand{\tvIP}[1][]{\tilde{\vec{\mathbb P}}^{#1}}
\newcommand{\Ltd}[1][\Omega]{[L^2(#1)]^d}
\newcommand{\Ltdd}[1][\Omega]{[L^2(#1)]^{d\times d}}
\newcommand{\Lt}[1][\Omega]{L^2(#1)}
\newcommand{\Hozd}[1][\Omega]{[H^1_0(#1)]^d}
\newcommand{\Hod}[1][\Omega]{[H^1(#1)]^d}
\newcommand{\Hdiv}[1][\Omega]{\mathbb{H}(\mathrm{div},#1)}
\newcommand{\Hdivs}[1][\Omega]{\mathbb{H}_{\mathrm s}({\mathrm{div}},#1)}
\newcommand{\oma}{\omega_a}
\newcommand{\Th}[1][h]{\mathcal{T}_{#1}}
\newcommand{\Ta}{\mathcal{T}_a}
\newcommand{\Vh}[1][h]{\mathcal{V}_{#1}}
\newcommand{\Vint}{\mathcal{V}_h^{\mathrm{int}}}
\newcommand{\Vext}{\mathcal{V}_h^{\mathrm{ext}}}
\newcommand{\VT}{\mathcal{V}_T}
\newcommand{\Fh}[1][h]{\mathcal{F}_{#1}}
\newcommand{\jump}[1]{\llbracket#1\rrbracket}
\newcommand{\nor}[1]{\lVert #1 \rVert}
\newcommand{\norm}[2][]{\lvert#2\rvert_{#1}}
\newcommand{\term}{\mathfrak{T}}
\DeclareMathOperator{\optr}{tr}
\DeclareMathOperator{\opdev}{dev}
\newcommand{\tenf}[1]{\stackunder[0pt]{\stackunder[1pt]{#1}{\scriptscriptstyle\sim}}{\scriptscriptstyle \sim}}
\newcommand{\ms}[1][]{\matr{\sigma}_{\hskip-0.2ex #1}}
\newcommand{\msh}{\ms[h]}
\newcommand{\mshdisc}{\ms[h,{\rm disc}]^k}
\newcommand{\mshlin}{\ms[h,{\rm lin}]^k}
\newcommand{\mt}[1][]{\matr{\tau}_{\hskip-0.3ex #1}}
\newcommand{\mth}{\mt[h]}
\newcommand{\mS}[1][]{\matr{\Sigma}_{#1}}
\newcommand{\tmS}{\tilde{\matr\Sigma}_{h}^a}
\newcommand{\tmST}{\tilde{\matr\Sigma}_{T}}
\newcommand{\vn}[1][]{\vec{n}_{#1}}
\newcommand{\vx}[1][]{\vec{x}}
\newcommand{\vu}[1][]{\vec{u}_{#1}}
\newcommand{\vv}[1][]{\vec{v}_{#1}}
\newcommand{\vV}[1][]{\vec{V}_{\hskip-0.2ex #1}}
\newcommand{\vw}[1][]{\vec{w}_{#1}}
\newcommand{\vz}[1][]{\vec{z}_{#1}}
\newcommand{\vy}[1][]{\vec{y}_{#1}}
\newcommand{\vM}[1][]{\vec{M}_{\hskip-0.2ex #1}}
\newcommand{\vf}[1][]{\vec{f}_{#1}}
\newcommand{\uh}{\vec u_{h}}
\newcommand{\vmh}{\vec{m}_{h}}
\newcommand{\uhk}[1][k]{\vec u_{h}^{#1}}
\newcommand{\vh}{\vec v_{h}}
\newcommand{\sha}{\matr\sigma_{\hskip-0.1ex h}^a}
\newcommand{\msu}[1][\vu]{\matr{\sigma}(\GRADs{#1})}
\newcommand{\omsu}[1][\uhk]{\overline{\matr{\sigma}}(\GRADs{#1})}
\newcommand{\msuh}{\msu[\uh]}
\newcommand{\mslinea}[1][\uhk]{\ms^{k-1}(\GRADs{#1})}
\newtheorem{theorem}{Theorem}
\newtheorem{assumption}[theorem]{Assumption}
\newtheorem{remark}[theorem]{Remark}
\newtheorem{proposition}[theorem]{Proposition}
\newtheorem{lemma}[theorem]{Lemma}
\newtheorem{algo}[theorem]{Algorithm}
\newtheorem{const}[theorem]{Construction}
\newtheorem{example}[theorem]{Example}
\numberwithin{equation}{section}
\numberwithin{theorem}{section}
\title{Equilibrated stress tensor reconstruction and a posteriori error estimation for nonlinear elasticity}
\author[1]{Michele Botti\footnote{\email{michele.botti01@universitadipavia.it}}}
\author[1,2]{Rita Riedlbeck\footnote{\email{rita.riedlbeck@umontpellier.fr}}}
\affil[1]{
  University of Montpellier, Institut Montp\'ellierain Alexander Grothendieck, 34095 Montpellier, France
}
\affil[2]{
  EDF R\&D, IMSIA, 91120 Palaiseau, France
}
\begin{document}
\maketitle

\begin{abstract}
We consider hyperelastic problems and their numerical solution using a conforming finite element discretization and iterative linearization algorithms. 
For these problems, we present equilibrated, weakly symmetric, $H(\rm{div)}$-conforming stress tensor reconstructions, obtained from local problems on patches around vertices using the Arnold--Falk--Winther finite element spaces.
We distinguish two stress reconstructions, one for the discrete stress and one representing the linearization error.
The reconstructions are independent of the mechanical behavior law.
Based on these stress tensor reconstructions, we derive an a posteriori error estimate distinguishing the discretization, linearization, and quadrature error estimates, and propose an adaptive algorithm balancing these different error sources.
We prove the efficiency of the estimate, and confirm it on a numerical test with analytical solution for the linear elasticity problem. 
We then apply the adaptive algorithm to a more application-oriented test, considering the Hencky--Mises and an isotropic damage models.
\end{abstract}

\section{Introduction}
\label{sec:introduction}

In this work we develop equilibrated $H(\rm{div})$-conforming stress tensor reconstructions for a class of (linear and) nonlinear elasticity problems in the small deformation regime.
Based on these reconstructions, we can derive an a posteriori error estimate distinguishing the discretization and linearization errors for conforming discretizations of the problem.

Let $\Omega\in\IR^d$, $d\in\{2,3\}$, be a bounded, simply connected polyhedron, which is occupied by a body subjected to a volumetric force field $\vf\in\Ltd$.
For the sake of simplicity, we assume that the body is fixed on its boundary $\partial\Omega$.
The nonlinear elasticity problem consists in finding a vector-valued displacement field $\vu : \Omega\to\IR^d$ solving
\begin{subequations}
  \label{eq:ne.strong}
  \begin{alignat}{4}
    \label{eq:ne.mech}
    -\DIV\msu &= \vf &\qquad &\text{in}\;\Omega,
    \\
    \label{eq:ne.dirichlet}
    \vu &= \vec{0} &\qquad &\text{on}\;\partial\Omega,
  \end{alignat}
\end{subequations} 
where $\GRADs{\vu}=\frac12 ((\GRAD\vu)^T+\GRAD\vu)$ denotes the symmetric gradient and expresses the strain tensor associated to $\vu$.
The stress-strain law $\ms:\Omega\times\Real^{d\times d}_{\mathrm{sym}}\to
\Real^{d\times d}_{\mathrm{sym}}$ is assumed to satisfy regularity requirements inspired by \cite{BotDPiSoc17,DPiDro15,DPiDro16}.
Problem \eqref{eq:ne.strong} describes the mechanical behavior of soft materials \cite{Tre75} and metal alloys \cite{PitZan02}. 
Examples of stress-strain relations of common use in the engineering practice are given in Section~\ref{sec:setting}. 
In these applications, the solution is often approximated using $H^1$-conforming finite elements. 
For nonlinear mechanical behavior laws, the resulting discrete nonlinear equation can then be solved using an iterative linearization algorithm yielding at each iteration a linear algebraic system to be solved, until the residual of the nonlinear equation lies under a predefined threshold.

In this paper we develop an a posteriori error estimate allowing to distinguish between the error stemming from the linearization of the problem and the one due to its discretization, as proposed in \cite{ErnVoh13} for nonlinear diffusion problems. Thanks to this distinction we can, at each iteration, compare these two error contributions and stop the linearization algorithm once its contribution is negligible compared to the discretization error. 

The a posteriori error estimate is based on equilibrated stress reconstructions.
It is well known that, in contrast to the analytical solution, the discrete stress tensor resulting from the conforming finite element method does not have continuous normal components across mesh interfaces, and that its divergence is not locally in equilibrium with the source term $\vf$ on mesh elements.
In this paper we consider the stress tensor reconstruction proposed in \cite{fvca8} for linear elasticity to restore these two properties.
This reconstruction uses the Arnold--Falk--Winther mixed finite element spaces \cite{ArnFalWin07}, leading to weakly symmetric tensors . 
In \cite{fvca8} this reconstruction is compared to a similar reconstruction introduced in \cite{RieDPiErn17} using the Arnold--Winther finite element spaces \cite{ArnWin02}, yielding a symmetric tensor, and very good agreement was observed while saving substantial computational effort.
In Section \ref{sec:reconst} we apply this reconstruction to the nonlinear case by constructing two stress tensors: one playing the role of the discrete stress and one expressing the linearization error.
They are obtained by summing up the solutions of constrained minimization problems on cell patches around each mesh vertex, so that they are $H({\rm div})$-conforming and the sum of the two reconstructions verifies locally the mechanical equilibrium \eqref{eq:ne.mech}. 
The patch-wise equilibration technique was introduced in \cite{DesMet99,BraeSchoe08} for the Poisson problem using the Raviart--Thomas finite element spaces. In \cite{DoerMel13} it is extended to linear elasticity without any symmetry constraint by using linewise Raviart--Thomas reconstructions. 
Elementwise reconstructions from local Neumann problems requiring some pre-computations to determine the normal fluxes to obtain an equilibrated stress tensor can be found in \cite{AinRan10,ChaLadPle12,LadPelRou91,OhnSteiWal01}, whereas in \cite{NicWitWoh08} the direct prescription of the degrees of freedom in the Arnold--Winther finite element space is considered.

Based on the equilibrated stress reconstructions, we develop the a posteriori error estimate in Section~\ref{sec:a_posteriori} and  prove that this error estimate is efficient, meaning that, up to a generic constant, it is also a local lower bound for the error.
The idea goes back to \cite{PraSyn47} and was advanced amongst others by \cite{Lad75,LadLeg83,AinOde00,Rep08} for the upper bound.
Local lower error bounds are derived in \cite{DesMet99,ErnVoh15,LucWoh04,BraeSchoe08,ErnVoh10}.
Using equilibrated fluxes for a posteriori error estimation offers several advantages. 
The first one is, as mentioned above, the possible distinction and comparison of error components by expressing them in terms of fluxes. 
Secondly, the error upper bound is obtained with fully computable constants. In our case these constants depend only on the parameters of the stress-strain relation. 
Thirdly, since the estimate is based on the discrete stress (and not the displacement), it does not depend on the mechanical behavior law (except for the constant in the upper bound).
Therefore, its implementation is independent and directly applicable to these laws, which makes the method convenient for FEM softwares in solid mechanics, which often provide a large choice of behavior laws. 
In addition, equilibrated error estimates were proven to be polynomial-degree robust for several linear problems in 2D, as the Poisson problem in \cite{BraePilSchoe09,ErnVoh15}, linear elasticity in \cite{DoerMel13} and the related Stokes problem in \cite{CerHecTan17} and recently in 3D in \cite{ErnVoh17}. 

This paper is organized as follows. 
In Section~\ref{sec:setting} we first formulate the assumptions on the stress-strain function $\ms$ and provide three examples of models used in the engineering practice. We then introduce the weak and the discrete formulations of problem \eqref{eq:ne.strong} and its linearization, along with some useful notation. 
In Section \ref{sec:reconst} we present the equilibrated stress tensor reconstructions, first assuming that we solve the nonlinear discrete problem exactly, and then, based on this first reconstruction, distinguish its discrete and its linearization error part at each iteration of a linearization solver.
In Section \ref{sec:a_posteriori} we derive the a posteriori error estimate, again first assuming the exact solution of the discrete problem and then distinguishing the different error components. We then propose an algorithm equilibrating the error sources using adaptive stopping criteria for the linearization and adaptive remeshing. We finally show the efficiency of the error estimate.
In Section \ref{sec:tests} we evaluate the performance of the estimates for the three behavior laws given as examples on numerical test cases.

\section{Setting} \label{sec:setting}

In this section we will give three examples of hyperelastic behavior laws, before writing the weak and the considered discrete formulation of problem \eqref{eq:ne.strong}.

\subsection{Continuous setting}

\begin{assumption}[Stress-strain relation]
  \label{ass:hypo}
We assume that the symmetric stress tensor $\ms:\Real^{d\times d}_{\mathrm{sym}}\to
\Real^{d\times d}_{\mathrm{sym}}$ is continuous on $\Real^{d\times d}_{\mathrm{sym}}$ and that $\ms(\matr{0})=\matr{0}$. Moreover, we assume that there exist real numbers $C_{\rm gro},C_{\rm mon}\in(0,+\infty)$ such that, for all 
$\mt,\matr{\eta}\in\Real^{d\times d}_{\mathrm{sym}}$,
\begin{subequations}
  \begin{alignat}{2} 
    \label{eq:hypo.gr}
    \norm[d\times d]{\ms(\mt)}&\le C_{\rm gro}\norm[d\times d]{\mt},
    &\quad &\text{(growth)}
    \\ 
    \label{eq:hypo.smon}
    \left(\ms(\mt)-\ms(\matr\eta)\right):\left(\mt-\matr{\eta}\right)&\ge
    C_{\rm mon}^2\norm[d\times d]{\mt-\matr\eta}^2,
    &\quad &\text{(strong monotonicity)}
  \end{alignat}
\end{subequations}
where $\mt:\matr{\eta}\eqbydef\optr(\mt^T\matr{\eta})$ 
with $\optr(\mt)\eqbydef\sum_{i=1}^d \tau_{ii}$, and $|\mt|^2_{d\times d}=\mt:\mt$. 
\end{assumption}

We next discuss a number of meaningful stress-strain relations for hyperelastic materials that satisfy the above assumptions. Hyperelasticity is a type of constitutive model for ideally elastic materials in which the 
stress is determined by the current state of deformation by deriving a stored energy density function 
$\Psi:\Real^{d\times d}_{\rm{sym}}\to\Real$, namely 
$$
\ms(\mt) \eqbydef \frac{\partial\Psi(\mt)}{\partial\mt}.
$$
\begin{example}[Linear elasticity]\label{ex:lin.cauchy}
  The stored energy density function leading to the linear elasticity model is
  \begin{equation}
    \label{eq:stored_energy_lin}
    \Psi_{\rm{lin}}(\mt) \eqbydef \frac\lambda2 \optr (\mt)^2 + \mu\optr(\mt^2),
  \end{equation} 
  where $\mu>0$ and $\lambda\ge0$ are the Lam\'e parameters.
  Deriving~\eqref{eq:stored_energy_lin} yields the usual Cauchy stress tensor
  \begin{equation}
    \label{eq:LinCauchy}
    \ms(\mt)=\lambda\optr(\mt)\Id+2\mu\mt.
  \end{equation}
  Being linear, the previous stress-strain relation clearly satisfies Assumption~\ref{ass:hypo}. 
\end{example}

\begin{example}[Hencky--Mises model]\label{ex:hencky}
  The nonlinear Hencky--Mises model of~\cite{Nec86,GatSte02} corresponds to the
  stored energy density function
  \begin{equation}
    \label{eq:stored_energy_HM}
    \Psi_{\rm{hm}}(\mt) \eqbydef \frac\alpha2 \optr (\mt)^2 + \Phi(\opdev(\mt)),
  \end{equation}
  where $\opdev:\Real^{d\times d}_{\rm{sym}}\to\Real$ defined by $\opdev(\mt)
  =\optr (\mt^2)-\frac1d\optr (\mt)^2$ is the deviatoric operator.
  Here, $\alpha\in(0,+\infty)$ and $\Phi:[0,+\infty)\to\Real$ is a function of class $C^2$ satisfying, for some positive 
  constants $C_1$, $C_2$, and $C_3$,
  \begin{equation}
    \label{eq:condition_phi} 
    C_1\le\Phi'(\rho)<\alpha,\quad |\rho\Phi''(\rho)|\le C_2\quad\text{and}\quad\Phi'(\rho)+2\rho\Phi''(\rho)\ge C_3
    \;\quad\forall \rho\in[0,+\infty). 
  \end{equation}
  We observe that taking $\alpha=\lambda+\frac2d\mu$ and $\Phi(\rho)=\mu\rho$ in~\eqref{eq:stored_energy_HM} leads to the 
  linear case~\eqref{eq:stored_energy_lin}. Deriving the energy density function~\eqref{eq:stored_energy_HM} yields
    \begin{equation}
      \label{eq:HenckyMises}
      \ms(\mt)=\tilde{\lambda}(\opdev(\mt))\optr (\mt)\Id+2\tilde{\mu}(\opdev(\mt))\mt,
    \end{equation}  
  with nonlinear Lam\'e functions $\tilde\mu(\rho)\eqbydef\Phi'(\rho)$ 
  and $\tilde\lambda(\rho)\eqbydef\alpha-\Phi'(\rho)$. Under conditions~\eqref{eq:condition_phi} it can be proven that the 
  previous stress-strain relation satisfies Assumption~\ref{ass:hypo}. 
\end{example}

In the previous example the nonlinearity of the model only depends on the deviatoric part of the strain. In the following model it depends on the term $\mt:\tenf C\mt$.

\begin{example}[An isotropic reversible damage model]\label{ex:damage}
  The isotropic reversible damage model of~\cite{CerChiCod10} can also be interpreted in the framework of hyperelasticity by setting up the energy density function as
  \begin{equation}
    \label{eq:stored_energy_DAM}
    \Psi_{\rm{dam}}(\mt) \eqbydef \frac{(1-D(\mt))}{2}\mt:\tenf C\mt + \Phi(D(\mt)),
  \end{equation}
  where $D:\Real^{d\times d}_{\rm{sym}}\to[0,1]$ is the scalar damage function and $\tenf C$ is a fourth-order 
  symmetric and uniformly elliptic tensor, namely, for some positive constants $C_*$ and $C^*$, it holds
  \begin{equation}
    \label{eq:unif.elliptic}
    C_*\norm[d\times d]{\mt}^2\le \tenf C\mt:\mt\le C^*\norm[d\times d]{\mt}^2, 
    \quad\forall\mt\in\Real^{d\times d}.
  \end{equation} 
  The function $\Phi:[0,1]\to\Real$ defines the relation between $\mt$ and $D$ by $\frac{\partial{\phi}}{\partial{D}}=
  \frac12\mt:\tenf C\mt$. The resulting stress-strain relation reads
  \begin{equation}
    \label{eq:Damage}
    \ms(\mt)=(1-D(\mt))\tenf C\com{(\cdot)}\mt.
  \end{equation}
  
  If there exists a continuous function $f:\lbrack 0,+\infty)\to[a,b]$ for some $0<a\le b\le1$, such that 
  $s\in\lbrack 0,+\infty)\to sf(s)$ is strictly increasing and, for all $\mt\in\Real^{d\times d}_{\mathrm{sym}}$, 
  $D(\mt)=1-f(\mt:\tenf C \mt)$, the damage model constitutive relation satisfies Assumption~\ref{ass:hypo}.
\end{example}

Before presenting the variational formulation of problem~\eqref{eq:ne.strong}, some useful notations are introduced. 
For $X\subset\overline{\Omega}$, we respectively denote by ${(\cdot,\cdot)}_X$ and $\nor{\cdot}_{X}$ the standard inner product and norm in $\Lt[X]$, with the convention that the subscript is omitted whenever $X=\Omega$. The same notation is used in the vector- and tensor-valued cases. $\Hod$ and $\Hdiv$ stand for the Sobolev spaces composed of vector-valued $\Ltd$ functions with weak gradient in $\Ltdd$, and tensor-valued $\Ltdd$ functions with weak divergence in $\Ltd$, respectively. 
Multiplying equation~\eqref{eq:ne.mech} by a test function $\vv\in\Hozd$ and integrating by 
parts one has
\begin{equation}
  \label{eq:ne.weak}
  (\msu,\GRADs{\vv})= (\vf,\vv). 
\end{equation}
Owing to the growth assumption~\eqref{eq:hypo.gr}, for all $\vv,\vw\in \Hod$, the form 
\begin{subequations}\label{eq:def_form_a}
  \begin{alignat}{1}
    & a(\vv,\vw)\eqbydef (\msu[\vv], \GRADs{\vw})
  \end{alignat}
\end{subequations}
is well defined and, from equation~\eqref{eq:ne.weak}, we can derive the following weak formulation of~\eqref{eq:ne.strong}:
\begin{equation}
  \label{eq:weak}
  \begin{aligned}
    \text{Given }\vf\in \Ltd,\text{ find }\vu\in \Hozd\text{  s.t., }  
    \forall\vv\in \Hozd,\;a(\vu,\vv) = (\vf,\vv).
  \end{aligned}
\end{equation}
 From \eqref{eq:weak} it is clear that the analytical stress tensor $\msu$ lies in the space $\Hdivs\eqbydef\{\mt\in\Ltdd ~|~ \DIV\mt\in\Ltd \text{ and $\mt$ is symmetric}\}$.  


\subsection{Discrete setting}\label{sec:discretization}

The discretization \eqref{eq:weak} is based on a conforming triangulation $\Th$ of $\Omega$, i.e.\ a set of closed triangles or tetrahedra with union $\overline{\Omega}$ and such that, for any distinct $T_1,T_2\in\Th$, the set $T_1\cap T_2$ is either a common edge, a vertex, the empty set or, if $d=3$, a common face. We assume that $\Th$ verifies the minimum angle condition, i.e., there exists $\alpha_{\mathrm{min}}>0$ uniform with respect to all considered meshes such that the minimum angle $\alpha_T$ of each $T\in\Th$ satisfies $\alpha_T\ge\alpha_{\mathrm{min}}$. 
The set of vertices of the mesh is denoted by $\Vh$; it is decomposed into interior vertices $\Vint$ and boundary vertices $\Vext$. For all $a\in\Vh$, $\Ta$ is the patch of elements sharing the vertex $a$, $\oma$ the corresponding open subdomain in $\Omega$ and $\mathcal V_a$ the set of vertices in $\oma$. For  all $T\in\Th$, $\VT$ denotes the set of vertices of $T$, $h_T$ its diameter and $\vec{n}_T$ its unit outward normal vector. 

For all $p\in\IN$ and all $T\in\Th$, we denote by $\IP[p](T)$ the space of $d$-variate polynomials in $T$ of total degree at most $p$ and by $\IP[p](\Th)=\{\varphi\in\Lt ~|~ \restrto{\varphi}{T}\in\IP[p](T) ~ \forall T\in\Th\}$ the corresponding broken space over $\Th$. In the same way we denote by $[\IP[p](T)]^d$ and $[\IP[p](T)]^{d\times d}$, respectively, the space of vector-valued and tensor-valued polynomials of total degree $p$ over $T$, and by $[\IP[p](\Th)]^d$ and $[\IP[p](\Th)]^{d\times d}$ the corresponding broken spaces over $\Th$.

In this work we will focus on conforming discretizations of problem \eqref{eq:ne.weak} of polynomial degree $p\ge2$ to avoid numerical locking, cf \cite{Vog83}. The discrete formulation reads: find $\uh\in \Hozd\cap[\IP[p](\Th)]^d$ such that 
\begin{equation}
  \label{eq:disc}
  \begin{aligned} 
    \forall\vh\in \Hozd\cap[\IP[p](\Th)]^d,\quad a(\uh,\vh) = (\vf,\vh).
  \end{aligned}
\end{equation}
This problem is usually solved using some iterative linearization algorithm defining at each iteration $k\ge1$ a linear approximation $\ms^{k-1}$ of $\ms$. Then the linearized formulation reads: find $\uhk\in \Hozd\cap[\IP[p](\Th)]^d$ such that 
\begin{equation}
  \label{eq:Newton} 
    \forall\vh\in \Hozd\cap[\IP[p](\Th)]^d,\quad (\mslinea,\GRADs\vh) = (\vf,\vh).
\end{equation}
For the Newton algorithm the linearized stress tensor is defined as
\begin{equation}
  \label{eq:sig_k-1}
  \mslinea \eqbydef \frac{\partial\ms(\mt)}{\partial\mt}\vert_{\mt=\GRADs\uhk[k-1]}\GRADs(\uhk-\uhk[k-1])+\msu[{\uhk[k-1]}].
\end{equation}


\section{Equilibrated stress reconstruction} \label{sec:reconst}

In general, the discrete stress tensor $\msuh$ resulting from \eqref{eq:disc} does not lie in $\Hdiv$ and thus cannot verify the equilibrium equation \eqref{eq:ne.mech}. In this section we will reconstruct from $\msuh$ a discrete stress tensor $\msh$ satisfying these properties. Based on this reconstruction, we then devise two equilibrated stress tensors representing the discrete stress and the linearization error respectively, which will be useful for the distinction of error components in the a posteriori error estimate of Section~\ref{sec:a_post_distinguishing}.


\subsection{Patchwise construction in the Arnold--Falk--Winther mixed finite element spaces} \label{sec:reconst_basic}

Let us for now suppose that $\uh$ solves~\eqref{eq:disc} exactly, before considering iterative linearization methods such as~\eqref{eq:Newton} in Section~\ref{sec:reconst_distinguishing}.
For the stress reconstruction we will use mixed finite element formulations on patches around mesh vertices in the spirit of \cite{fvca8, RieDPiErn17}. The mixed finite elements based on the dual formulation of \eqref{eq:ne.mech} will provide a stress tensor lying in $\Hdiv$. A global computation is too expensive for this post-processing reconstruction, so we solve local problems on patches of elements around mesh vertices. The goal is to obtain a stress tensor $\msh$ in a suitable (i.e. $H(\rm{div})$-conforming) finite element space by summing up these local solutions. The local problems are posed such that this global stress tensor is close to the discrete stress tensor $\msuh$ obtained from \eqref{eq:disc}, and that it satisfies the mechanical equilibrium on each element. 

In \cite{RieDPiErn17} the stress tensor is reconstructed in the Arnold--Winther finite element space \cite{ArnWin02}, directly providing symmetric tensors, but requiring high computational effort. In this work, as in \cite{fvca8}, we weaken the symmetry constraint and impose it weakly, as proposed in \cite{ArnFalWin07}:
for each element $T\in\Th$, the local Arnold--Falk--Winther mixed finite element spaces of degree $q\ge1$ hinge on the Brezzi--Douglas--Marini mixed finite element spaces \cite{BreDouMar86} for each line of the stress tensor and are defined by
\begin{subequations} 
 \label{eq:BDM.local}
\begin{align*}
\mS[T] &\eqbydef  [\IP[q](T)]^{d\times d}, \\
\vV[T]  &\eqbydef  [\IP[q-1](T)]^d,\\
\matr\Lambda_T &\eqbydef  \{ \matr{\mu} \in [\IP[q-1](T)]^{d\times d} ~\vert ~ \matr{\mu}=-\matr{\mu}^T \}.
\end{align*}
\end{subequations}
For $q=2$, the degrees of freedom are displayed in Figure~\ref{fig:AFW_T}.
On a patch $\oma$ the global space $\mS[h](\oma)$ is the subspace of $\Hdiv[\oma]$ composed of functions belonging piecewise to $\mS[T]$. The spaces $\vV[h](\oma)$ and $\matr\Lambda_h(\oma)$ consist of functions lying piecewise in $\vV[T]$ and $\matr\Lambda_T$ respectively, with no continuity conditions between two elements.

\begin{figure}[t]
  \centering
  \def\scale{0.7}
  \begin{tikzpicture}[scale=\scale]
    \def\cy{0.3}
    \def\cx{0.52}
    \def\ch{3.464}
    \def\cl{2}
    \def\cs{5}
    \draw (-\cl,0) -- (\cl,0) -- (0,\ch) -- (-\cl,0);

    \fill (-0.25*\cl -0.09,0.25*\ch) circle (2.2pt)
         (-0.25*\cl+0.09,0.25*\ch) circle (2.2pt);
    \fill (0.25*\cl-0.09,0.25*\ch) circle (2.2pt)
          (0.25*\cl+0.09,0.25*\ch) circle (2.2pt);
    \fill (-0.09,0.5*\ch) circle (2.2pt)
          (0.09,0.5*\ch) circle (2.2pt);

    \draw[-implies,double equal sign distance,thick] (0,0) -- (0,-2*\cy);
    \draw[-implies,double equal sign distance,thick] (-0.67*\cl,0) -- (-0.67*\cl,-2*\cy);
    \draw[-implies,double equal sign distance,thick] (0.67*\cl,0) -- (0.67*\cl,-2*\cy);

    \draw[-implies,double equal sign distance,thick] (-0.83*\cl,0.17*\ch) -- (-0.83*\cl - \cx,0.17*\ch + \cy);
    \draw[-implies,double equal sign distance,thick] (-0.5*\cl,0.5*\ch) -- (-0.5*\cl - \cx,0.5*\ch + \cy);
    \draw[-implies,double equal sign distance,thick] (-0.17*\cl,0.83*\ch) -- (-0.17*\cl - \cx,0.83*\ch + \cy);

    \draw[-implies,double equal sign distance,thick] (0.83*\cl,0.17*\ch) -- (0.83*\cl + \cx,0.17*\ch + \cy);
    \draw[-implies,double equal sign distance,thick] (0.5*\cl,0.5*\ch) -- (0.5*\cl + \cx,0.5*\ch + \cy);
    \draw[-implies,double equal sign distance,thick] (0.17*\cl,0.83*\ch) -- (0.17*\cl + \cx,0.83*\ch + \cy);

    \draw (-\cl+\cs,0) -- (\cl+\cs,0) -- (0+\cs,\ch) -- (-\cl+\cs,0);

    \fill (-0.25*\cl -0.09+\cs,0.25*\ch) circle (2.2pt)
         (-0.25*\cl+0.09+\cs,0.25*\ch) circle (2.2pt);
    \fill (0.25*\cl-0.09+\cs,0.25*\ch) circle (2.2pt)
          (0.25*\cl+0.09+\cs,0.25*\ch) circle (2.2pt);
    \fill (-0.09+\cs,0.5*\ch) circle (2.2pt)
          (0.09+\cs,0.5*\ch) circle (2.2pt);

    \draw (-\cl+2*\cs,0) -- (\cl+2*\cs,0) -- (0+2*\cs,\ch) -- (-\cl+2*\cs,0);

    \fill (-0.25*\cl+2*\cs,0.25*\ch) circle (2.2pt);
    \fill (0.25*\cl+2*\cs,0.25*\ch) circle (2.2pt);
    \fill (2*\cs,0.5*\ch) circle (2.2pt);
  \end{tikzpicture}
  \caption{Element diagrams for $(\matr{\Sigma}_T,\vec{V}_T,\matr\Lambda_T)$ in the case $d=q=2$}
  \label{fig:AFW_T}
\end{figure}
Let now $q\eqbydef p$. On each patch we need to consider subspaces where a zero normal component is enforced on the stress tensor on the boundary of the patch, so that the sum of the local solutions will have continous normal component across any mesh face inside $\Omega$. Since the boundary condition in the exact problem prescribes the displacement and not the normal stress, we distinguish the case whether $a$ is an interior vertex or a boundary vertex. If $a\in\Vint$ we set
\begin{subequations}
  \label{eq:space.BDM}
  \begin{eqnarray}
    \label{eq:space.BDM.S.int}
    \mS[h]^{a} &\eqbydef& \{\mth\in\mS[h](\oma)~|~
                      \mth\vn[\oma]=\vec 0 \text{~on~} \del\oma\},\\
    \label{eq:space.BDM.V.int}
    \vV[h]^{a} &\eqbydef& \{\vh\in\vV[h](\oma) ~|~ (\vh,\vz)_{\oma}=0 ~ \forall\vz\in\vec{RM}^d\},\\
    \label{eq:space.BDM.L.int}
    \matr\Lambda_h^{a} &\eqbydef& \matr\Lambda_h(\oma),
  \end{eqnarray}
where $\vec{RM}^2\eqbydef\{\vec b+c (x_2,-x_1)^T~\vert~\vec b\in\IR^2, c\in\IR\}$ and $\vec{RM}^3\eqbydef\{\vec b+\vec a \times\vec x~\vert~\vec b\in\IR^3, \vec{a}\in\IR^3\}$ are the spaces of rigid-body motions respectively for $d=2$ and $d=3$. If $a\in\Vext$ we set
  \begin{eqnarray}
    \label{eq:space.BDM.S.ext}
    \mS[h]^{a} &\eqbydef& \{\mth\in\mS[h](\oma)~|~
                      \mth\vn[\oma]=\vec 0 \text{~on~} \del\oma\backslash \del\Omega \},\\
    \label{eq:space.BDM.V.ext}
    \vV[h]^{a} &\eqbydef& \vV[h](\oma),\\
    \label{eq:space.BDM.L.ext}
    \matr\Lambda_h^{a} &\eqbydef& \matr\Lambda_h(\oma).
  \end{eqnarray}
\end{subequations}
For each vertex $a\in\Vh$ we define its hat function $\psi_a\in\IP[1](\Th)$ as the piecewise linear function taking value one at the vertex $a$ and zero on all other mesh vertices. 

\begin{const}[Stress tensor reconstruction]\label{const:BDM}
    Let $\uh$ solve \eqref{eq:disc}. For each $a\in\Vh$ find $(\sha,\vec r_h^a,\matr\lambda_h^a)\in \mS[h]^{a}\times\vV[h]^{a}\times\matr\Lambda_h^a$ such that for all $( \mth,\vh,\matr\mu_h)\in \mS[h]^{a}\times\vV[h]^{a}\times\matr\Lambda_h^a$,
    \begin{subequations}  \label{eq:stress_BDM_eq}
     \begin{eqnarray}
         (\sha,\mth)_{\oma} + (\vec{r}^a_h,\DIV\mth)_{\oma} + (\matr{\lambda}_h^a,\mth)_{\oma} &=& 
         (\psi_a\msuh,\mth)_{\oma} ,\label{eq:stress_BDM_eq1}
         \\
        (\DIV \sha,\vh)_{\oma} &=& (-\psi_a\vf + \msuh\vGRAD\psi_a,\vh)_{\oma},\label{eq:stress_BDM_eq2}
        \\
        (\sha,\matr{\mu}_h)_{\oma} & =& 0.\label{eq:stress_BDM_eq3}
       \end{eqnarray}
     \end{subequations}
   Then, extending $\sha$ by zero outside $\oma$, set $\msh\eqbydef\sum_{a\in\Vh}\sha$.
\end{const}
For interior vertices, the source term in \eqref{eq:stress_BDM_eq2} has to verify the Neumann compatibility condition 
\begin{equation}
    (-\psi_a\vf + \msuh\vGRAD\psi_a,\vz)_{\oma}=0 \quad \forall\vz\in\vec{RM}^d.
    \label{eq:hatfctorth}
\end{equation}
Taking $\psi_a\vz$ as a test function in \eqref{eq:disc}, we see that \eqref{eq:hatfctorth} holds and we obtain the following result.
\begin{lemma}[Properties of $\msh$] \label{lem:general_stress_properties}
Let $\msh$ be prescribed by Construction~\ref{const:BDM}. 
Then $\msh\in\Hdiv$, and for all $T\in\Th$, the following holds: 
\begin{equation}
     \label{eq:data_osc}
     (\vf+\DIV\msh,{\vv})_T =0 \quad\forall {\vv}\in\vV[T] ~~\forall T\in\Th.
\end{equation}

\end{lemma}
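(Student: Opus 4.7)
My plan is to prove the lemma in two parts: first the $H(\mathrm{div})$-conformity of $\msh$, then the local equilibrium identity \eqref{eq:data_osc}.

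\textbf{Step 1: $H(\mathrm{div})$-conformity.} Each local reconstruction $\sha$ lies in $\mS[h](\oma)\subset\Hdiv[\oma]$ by construction, so to conclude that the extension by zero belongs to $\Hdiv$ on the whole of $\Omega$ it suffices to check that the normal trace of $\sha$ vanishes on those parts of $\del\oma$ that are interior to $\Omega$. This is exactly what is encoded in the boundary condition built into the spaces $\mS[h]^{a}$: for $a\in\Vint$ one has $\sha\vn[\oma]=\vec 0$ on the whole of $\del\oma$, while for $a\in\Vext$ one has $\sha\vn[\oma]=\vec 0$ on $\del\oma\setminus\del\Omega$, which is precisely the interior part. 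Hence $\sha\in\Hdiv$ after extension by zero, and so is $\msh$ as a finite sum.

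\textbf{Step 2: local equilibrium, partition of unity.} Fix $T\in\Th$ and $\vv\in\vV[T]$, and extend $\vv$ by zero outside $T$. Using that $\sha$ is supported in $\oma$ and that $\sum_{a\in\VT}\psi_a\equiv1$ on $T$, I rewrite
\begin{equation*}
(\vf+\DIV\msh,\vv)_T=\sum_{a\in\VT}\bigl(\psi_a\vf+\DIV\sha,\vv\bigr)_T.
\end{equation*}
If I can substitute $\DIV\sha=-\psi_a\vf+\msuh\vGRAD\psi_a$ on $T$ for each $a\in\VT$, the first two terms cancel and what remains is $\bigl(\msuh\sum_{a\in\VT}\vGRAD\psi_a,\vv\bigr)_T=0$ since $\sum_{a\in\VT}\vGRAD\psi_a=\vGRAD(1)=\vec 0$ on $T$. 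So the entire issue is reduced to proving this elementwise identity for $\DIV\sha$.

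\textbf{Step 3: identifying $\DIV\sha$ on each element.} For $a\in\Vext$ this is immediate: $\vV[h]^a=\vV[h](\oma)$, so testing \eqref{eq:stress_BDM_eq2} against $\vh$ equal to $\vv$ on $T$ and zero elsewhere (which is piecewise in $\vV[T]$, no continuity required) yields the desired pointwise identity because $\DIV\sha\in\vV[h](\oma)$ is piecewise $[\IP[p-1](T)]^d$. The subtle case is $a\in\Vint$, where the test space in \eqref{eq:stress_BDM_eq2} is restricted to be orthogonal to $\vec{RM}^d$, and a test function localized to a single element generally fails this condition. This is the main obstacle.

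\textbf{Step 4: overcoming the rigid-body-motion obstruction.} I plan to show that for $a\in\Vint$ the identity $(\DIV\sha,\vh)_{\oma}=(-\psi_a\vf+\msuh\vGRAD\psi_a,\vh)_{\oma}$ extends from $\vh\in\vV[h]^{a}$ to the full space $\vV[h](\oma)$. Since $p\ge2$ we have $\vec{RM}^d\subset\vV[h](\oma)$, so it suffices to test the difference between the two sides against an arbitrary $\vz\in\vec{RM}^d$. The right-hand side vanishes by the Neumann compatibility condition \eqref{eq:hatfctorth} established in the paragraph preceding the lemma. For the left-hand side, integration by parts gives $(\DIV\sha,\vz)_{\oma}=-(\sha,\GRAD\vz)_{\oma}$ because $\sha\vn[\oma]=\vec0$ on all of $\del\oma$ for interior vertices; since $\vz\in\vec{RM}^d$ the symmetric part of $\GRAD\vz$ vanishes, so $\GRAD\vz$ is a constant skew-symmetric tensor, hence in $\matr\Lambda_h^a$, and the weak symmetry constraint \eqref{eq:stress_BDM_eq3} forces $(\sha,\GRAD\vz)_{\oma}=0$. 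This closes the argument, and combined with Steps~2--3 yields \eqref{eq:data_osc}.
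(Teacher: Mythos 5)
Your proof is correct and follows the same overall structure as the paper's: show that \eqref{eq:stress_BDM_eq2} in fact holds for every $\vh\in\vV[h](\oma)$ (not just $\vV[h]^a$), then localize element by element via $\msh\vert_T=\sum_{a\in\VT}\sha\vert_T$ and the partition of unity. Your Step~4 usefully spells out a point the paper leaves implicit when it writes that \eqref{eq:hatfctorth} alone lets one ``infer'' the extended identity: one must also check that $(\DIV\sha,\vz)_{\oma}=0$ for all $\vz\in\vec{RM}^d$, which, as you note, needs the zero normal trace on $\del\oma$, integration by parts, and crucially the weak-symmetry constraint \eqref{eq:stress_BDM_eq3} to kill the skew-symmetric constant $\GRAD\vz$.
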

\begin{proof}
All the fields $\sha$ are in $\Hdiv[\oma]$ and satisfy appropriate zero normal conditions so that their zero-extension to $\Omega$ is in $\Hdiv$. Hence, $\msh\in\Hdiv$. Let us prove~\eqref{eq:data_osc}. 
Since \eqref{eq:hatfctorth} holds for all $a\in\Vint$, we infer that~\eqref{eq:stress_BDM_eq2} is actually true for all $\vh \in\vV[h](\oma)$. 
The same holds if $a\in\Vext$ by definition of $\vV[h]^a$. 
Since $\vV[h](\oma)$ is composed of piecewise polynomials that can be chosen independently in each cell $T\in\Ta$, and using $\msh\vert_T=\sum_{a\in\VT}\sha\vert_T$ and the partition of unity $\sum_{a\in\VT}\psi_a = 1$, we infer that $(\vf+\DIV\msh,{\vv})_T=0$ for all ${\vv}\in\vV[T]$ and all $T\in\Th$.
\end{proof}


\subsection{Discretization and linearization error stress reconstructions} \label{sec:reconst_distinguishing}
Let now, for $k\ge1$, $\uhk$ solve~\eqref{eq:Newton}. We will construct two different equilibrated $H(\rm{div})$-conforming stress tensors. The first one, $\mshdisc$, represents as above the discrete stress tensor $\msu[\uhk]$, for which we will have to modify Construction~\ref{const:BDM}, because the Neumann compatibility condition \eqref{eq:hatfctorth} is not satisfied anymore. The second stress tensor $\mshlin$ will be a measure for the linearization error and approximate $\mslinea-\msu[\uhk]$. The matrix resulting from the left side of \eqref{eq:stress_BDM_eq} will stay unchanged and we will only modify the source terms.

We denote by $\omsu$ the $L^2$-orthogonal projection of $\msu[\uhk]$ onto $[\IP[p-1](\Th)]^{d\times d}$ such that $(\msu[\uhk]-\omsu,\mth)=0$ for any $\mth\in[\IP[p-1](\Th)]^{d\times d}$.

\begin{const}[Discrete stress reconstruction]
   \label{const:disc}
   For each $a\in\Vh$ solve \eqref{eq:stress_BDM_eq} with $\uhk$ instead of $\uh$, $\omsu$ instead of $\msu[\uhk]$ and the source term in \eqref{eq:stress_BDM_eq2} replaced by
       $$-\psi_a\vf+\omsu\vGRAD\psi_a-\vy[{\rm disc}]^k,$$
   where $\vy[{\rm disc}]^k\in\vec{RM}^d$ is the unique solution of
     \begin{equation}
         \label{eq:ydisc}
         (\vy[{\rm disc}]^k,\vz)_{\oma} = -(\vf,\psi_a\vz)_{\oma} +( \omsu,\GRADs(\psi_a\vz))_{\oma}\quad\forall\vz\in\vec{RM}^d.
     \end{equation}
     The so obtained problem reads: find $(\sha,\vec r_h^a,\matr\lambda_h^a)\in \mS[h]^{a}\times\vV[h]^{a}\times\matr\Lambda_h^a$ such that for all $( \mth,\vh,\matr\mu_h)\in \mS[h]^{a}\times\vV[h]^{a}\times\matr\Lambda_h^a$,
    \begin{subequations}  
     \begin{eqnarray*} 
       (\sha,\mth)_{\oma} + (\vec{r}^a_h,\DIV\mth)_{\oma} + (\matr{\lambda}_h^a,\mth)_{\oma} &=& 
       (\psi_a\omsu,\mth)_{\oma} ,
       \\
       (\DIV \sha,\vh)_{\oma} &=& (-\psi_a\vf+\omsu\vGRAD\psi_a-\vy[{\rm disc}]^k,\vh)_{\oma},
       \\
       (\sha,\matr{\mu}_h)_{\oma} & =& 0.
      \end{eqnarray*}
     \end{subequations}
   Then set $\mshdisc\eqbydef\sum_{a\in\Vh}\sha$.
\end{const}

\begin{const}[Linearization error stress reconstruction]
   \label{const:lin}
   For each $a\in\Vh$ solve \eqref{eq:stress_BDM_eq} with $\uhk$ instead of $\uh$, the source term in \eqref{eq:stress_BDM_eq1} replaced by 
      $$\psi_a(\mslinea-\omsu),$$ 
   and the source term in \eqref{eq:stress_BDM_eq2} replaced by
      $$(\mslinea-\omsu)\vGRAD\psi_a+\vy[{\rm disc}]^k,$$
   where $\vy[{\rm disc}]^k\in\vec{RM}^d$ is defined by \eqref{eq:ydisc}. The corresponing local problem is to  find $(\sha,\vec r_h^a,\matr\lambda_h^a)\in \mS[h]^{a}\times\vV[h]^{a}\times\matr\Lambda_h^a$ such that for all $( \mth,\vh,\matr\mu_h)\in \mS[h]^{a}\times\vV[h]^{a}\times\matr\Lambda_h^a$,
    \begin{subequations} 
       \begin{eqnarray*}
         (\sha,\mth)_{\oma} + (\vec{r}^a_h,\DIV\mth)_{\oma} + (\matr{\lambda}_h^a,\mth)_{\oma} &=& 
         (\psi_a(\mslinea-\omsu),\mth)_{\oma} ,
         \\
        (\DIV \sha,\vh)_{\oma} &=& ((\mslinea-\omsu)\vGRAD\psi_a+\vy[{\rm disc}]^k,\vh)_{\oma},
        \\
        (\sha,\matr{\mu}_h)_{\oma} &=& 0.
       \end{eqnarray*}
     \end{subequations}
   Then set $\mshlin\eqbydef\sum_{a\in\Vh}\sha$.\\
\end{const}
Notice that the role of $\vy[{\rm disc}]^k$ is to guarantee that, for interior vertices, the source terms in Constructions~\ref{const:disc} and~\ref{const:lin} satisfy the Neumann compatibility conditions 
$$
  \begin{aligned}
  (-\psi_a\vf+\omsu\vGRAD\psi_a-\vy[{\rm disc}]^k,\vz)_{\oma} &=& 0 \quad \forall\vz\in\vec{RM}^d,
  \\
  ((\mslinea-\omsu)\vGRAD\psi_a+\vy[{\rm disc}]^k,\vz)_{\oma} &=& 0 \quad \forall\vz\in\vec{RM}^d.
  \end{aligned}
$$

\begin{lemma}[Properties of the discretization and linearization error stress reconstructions] 
 \label{lem:distinguishing_properties}
 Let $\mshdisc$ and $\mshlin$ be prescribed by Constructions~\ref{const:disc} and~\ref{const:lin}. Then it holds
 \begin{enumerate}
  \item $\mshdisc,\mshlin\in\Hdiv$,
  \item $(\vf + \DIV(\mshdisc+\mshlin),{\vv})_T = 0 \quad \forall {\vv}\in\vV[T] ~~\forall T\in\Th$,
  \item As the Newton solver converges, $\mshlin\to\matr0$.
 \end{enumerate}
\end{lemma}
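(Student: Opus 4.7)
The plan is to verify the three assertions sequentially, in the spirit of Lemma~\ref{lem:general_stress_properties}: items~1 and~2 follow from the local definitions of the patchwise contributions and the partition of unity, while item~3 rests on a cancellation at Newton convergence produced by the $L^2$-orthogonality defining $\omsu$. Item~1 is identical to Lemma~\ref{lem:general_stress_properties}: each local contribution belongs to $\mS[h]^a$, hence has zero normal trace on $\partial\oma\setminus\partial\Omega$, extends by zero to an element of $\Hdiv$, and summation over vertices preserves this. For item~2 one first verifies the Neumann compatibility of both source terms against $\vec{RM}^d$ at interior vertices (automatic at boundary vertices, where $\vV[h]^a=\vV[h](\oma)$ carries no constraint): in Construction~\ref{const:disc} this is precisely the role of $\vy[{\rm disc}]^k$ given by \eqref{eq:ydisc}, while in Construction~\ref{const:lin} it follows from testing \eqref{eq:Newton} with $\psi_a\vz$ for $\vz\in\vec{RM}^d$, which gives $(\mslinea\vGRAD\psi_a,\vz)_{\oma}=(\vf,\psi_a\vz)_{\oma}$ (using $\GRADs\vz=\matr 0$), and subtracting \eqref{eq:ydisc}. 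Once compatibility holds, the divergence equations extend to all $\vh\in\vV[h](\oma)$; choosing $\vh$ supported on a single cell $T\in\Ta$ and writing $\matr\sigma_h^{a,{\rm disc}}$, $\matr\sigma_h^{a,{\rm lin}}$ for the two patchwise contributions, the sum of the two divergence equations reads
\begin{equation*}
(\DIV(\matr\sigma_h^{a,{\rm disc}}+\matr\sigma_h^{a,{\rm lin}}),\vh)_T
=(-\psi_a\vf+\mslinea\vGRAD\psi_a,\vh)_T,\qquad\forall\vh\in\vV[T],
\end{equation*}
since the $\omsu\vGRAD\psi_a$ and $\pm\vy[{\rm disc}]^k$ contributions cancel pairwise. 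Summing over $a\in\VT$ and using $\sum_{a\in\VT}\psi_a=1$ with $\sum_{a\in\VT}\vGRAD\psi_a=\vec 0$ on $T$ annihilates the $\mslinea$ term and yields item~2.

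For item~3, I would read Newton convergence as $\uhk=\uhk[k-1]$; then \eqref{eq:sig_k-1} gives $\mslinea=\msu[\uhk]$ and \eqref{eq:Newton} promotes $\uhk$ to a solution of \eqref{eq:disc}. A short computation shows $\vy[{\rm disc}]^k=\vec 0$ in this limit: for an interior vertex $a$ and $\vz\in\vec{RM}^d$, testing \eqref{eq:disc} with $\psi_a\vz$ yields $(\msu[\uhk]\vGRAD\psi_a,\vz)_{\oma}=(\vf,\psi_a\vz)_{\oma}$, and since $\vGRAD\psi_a\OTIMES_{\mathrm s}\vz$ is piecewise of degree at most $1\le p-1$, the $L^2$-orthogonality of $\msu[\uhk]-\omsu$ against $[\IP[p-1](\Th)]^{d\times d}$ allows replacing $\msu[\uhk]$ by $\omsu$, so that the right-hand side of \eqref{eq:ydisc} vanishes. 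For the linear Cauchy law~\eqref{eq:LinCauchy}, $\GRADs\uhk\in[\IP[p-1](\Th)]^{d\times d}$ implies $\omsu=\msu[\uhk]$, so both source terms of Construction~\ref{const:lin} vanish identically; uniqueness of the local Arnold--Falk--Winther mixed problem then gives $\mshlin=\matr 0$. For genuinely nonlinear laws, the residual source at convergence reduces to the projection defect $\msu[\uhk]-\omsu$, which is independent of the Newton iteration; inf-sup stability of the local problem yields $\|\mshlin\|\lesssim\|\msu[\uhk]-\omsu\|$, so $\mshlin$ vanishes in the linearization limit up to this projection error (tracked separately by the paper's quadrature estimator).

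The main obstacle is item~3 in the truly nonlinear setting: a pointwise identity $\mshlin=\matr 0$ requires $\msu[\uhk]\in[\IP[p-1](\Th)]^{d\times d}$, which fails outside the linear case; accordingly, item~3 should be read as an asymptotic property backed by the patchwise stability bound above rather than as a strict identity.
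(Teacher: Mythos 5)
Your treatment of items~1 and~2 matches the paper's approach: item~1 follows from the definition of $\mS[h]^a$, and for item~2 the paper likewise sums the two divergence equations to obtain the source $(-\psi_a\vf+\mslinea\vGRAD\psi_a,\vh)_{\oma}$, establishes Neumann compatibility by testing \eqref{eq:Newton} with $\psi_a\vz$, and then invokes the cell-wise partition-of-unity argument from Lemma~\ref{lem:general_stress_properties}; your explicit pairwise cancellation of the $\omsu\vGRAD\psi_a$ and $\pm\vy[{\rm disc}]^k$ contributions and the use of $\sum_{a\in\VT}\vGRAD\psi_a=\vec 0$ on $T$ spell out the step the paper leaves implicit.

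For item~3 the paper offers no argument at all --- its proof addresses only items~1 and~2 and then stops. Your analysis is in fact more careful than the source. You correctly observe that at Newton convergence ($\uhk=\uhk[k-1]$, hence $\mslinea=\msu[\uhk]$ and, by $L^2$-orthogonality of $\msu[\uhk]-\omsu$ against degree $p-1$ polynomials, $\vy[{\rm disc}]^k=\vec 0$), the source terms of Construction~\ref{const:lin} collapse to the projection defect $\msu[\uhk]-\omsu$. This vanishes identically only when $\msu[\uhk]$ already lies in $[\IP[p-1](\Th)]^{d\times d}$, as for the linear Cauchy law; otherwise $\mshlin$ is bounded by the projection/quadrature defect but not exactly zero, consistent with the paper introducing a separate quadrature estimator. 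Reading item~3 as an asymptotic property is therefore the right interpretation of an informally stated claim. One minor slip worth correcting: the defect $\msu[\uhk]-\omsu$ is not independent of the Newton iteration --- both $\msu[\uhk]$ and its projection $\omsu$ depend on $\uhk$; what is true is that this defect converges, as $k\to\infty$, to a limit $\msu[\uh]-\overline{\matr\sigma}(\GRADs\uh)$ that no longer depends on $k$.
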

\begin{proof}
  The proof is similar to the proof of Lemma~\ref{lem:general_stress_properties}. The first property is again satisfied due to the definition of $\mS[h]^a$.
  In order to show that the second property holds, we add the two equations \eqref{eq:stress_BDM_eq2} obtained for each of the constructions. The right hand side of this sum then reads $(-\psi_a\vf+\mslinea\vGRAD\psi_a,\vh)_{\oma}$. Once again we can, for any $\vz\in\vec{RM}^d$, take $\psi_a\vz$ as a test function in \eqref{eq:Newton} to show that this term is zero if $\vh\in\vec{RM}^d$, and so the equation holds for all $\vh\in\vV[h](\oma)$. Then we proceed as in the proof of Lemma~\ref{lem:general_stress_properties}.
\end{proof}


\section{A posteriori error estimate and adaptive algorithm}\label{sec:a_posteriori}

In this section we first derive an upper bound on the error between the analytical solution of~\eqref{eq:weak} and the solution $\uh$ of \eqref{eq:disc}, in which we then identify and distinguish the discretization and linearization error components at each Newton iteration for the solution $\uhk$ of \eqref{eq:Newton}. Based on this distinction, we present an adaptive algorithm stopping the Newton iterations once the linearization error estimate is dominated by the estimate of the discretization error. Finally, in a more theoretical part, we show the effectivity of the error estimate.


\subsection{Guaranteed upper bound}\label{sec:a_post_basic}
We measure the error in the energy norm
\begin{equation} 
   \label{eq:nor_en_def}
  \nor{\vv}_{\rm en}^2\eqbydef a(\vv,\vv) = (\msu[\vv],\GRADs\vv),
\end{equation}
for which we obtain the properties
\begin{equation}
   \label{eq:nor_en_prop}
   C_{\rm mon}^2 C_K^{-2}\nor{\GRAD\vv}^2 \le \nor{\vv}_{\rm en}^2 \le C_{\rm gro}\nor{\GRADs\vv}^2, 
\end{equation}
by applying~\eqref{eq:hypo.smon} and the Korn inequality for the left inequality, and the Cauchy--Schwarz inequality and~\eqref{eq:hypo.gr} for the right one. In our case it holds $C_K=\sqrt2$, owing to~\eqref{eq:ne.dirichlet}.

\begin{theorem}[Basic a posteriori error estimate]\label{thm:estimate_basic}
 Let $\vu$ be the analytical solution of \eqref{eq:weak} and $\uh$ the discrete solution of \eqref{eq:disc}. Let $\msh$ be the stress tensor defined in Construction~\ref{const:BDM}.
Then, 
\begin{equation}\label{eq:estimate_basic}
 \nor{\vu-\uh}_\mathrm{en} \le \sqrt2C_{\rm gro}C_{\rm mon}^{-3} \left(\sum_{T\in\Th} \big(\frac{h_T}{\pi}\nor{\vf+\DIV\msh}_T +  \nor{\msh-\msuh}_T  \big)^2\right)^{\nicefrac12}.
\end{equation}
\end{theorem}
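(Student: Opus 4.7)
My plan is to establish a residual-type identity for $R(\vv) \eqbydef (\vf,\vv) - (\msuh, \GRADs \vv)$ applied to $\vv = \vu - \uh \in \Hozd$, and then sandwich $\nor{\vu-\uh}_{\rm en}$ between $R(\vu-\uh)$ on one side and the element-wise estimators on the other.

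First I would combine the right inequality in \eqref{eq:nor_en_prop} with the strong monotonicity \eqref{eq:hypo.smon} and the weak formulation \eqref{eq:ne.weak} (tested against $\vu-\uh$) to obtain the dual bound
$$\nor{\vu-\uh}_{\rm en}^2 \le C_{\rm gro}\,\nor{\GRADs(\vu-\uh)}^2 \le C_{\rm gro}\, C_{\rm mon}^{-2}\, R(\vu-\uh),$$
using that $(\ms(\GRADs\vu) - \ms(\GRADs\uh), \GRADs(\vu-\uh)) = R(\vu-\uh)$. Next I would rewrite $R$ in terms of the equilibrated reconstruction. The key observation is that $\msuh = \ms(\GRADs\uh)$ is pointwise symmetric, so contracting with the antisymmetric part of the full gradient vanishes and hence $(\msuh, \GRADs\vv) = (\msuh, \GRAD\vv)$. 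Inserting and subtracting $\msh \in \Hdiv$ (Lemma~\ref{lem:general_stress_properties}) and integrating by parts against $\vv \in \Hozd$ produces
$$R(\vv) = (\vf + \DIV\msh,\,\vv) + (\msh - \msuh,\, \GRAD\vv).$$

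For the first term, since $p \ge 2$ gives $[\IP[0](T)]^d \subset \vV[T]$, the equilibrium property \eqref{eq:data_osc} forces $\vf + \DIV\msh$ to have zero mean on each $T$; subtracting the elementwise constant mean of $\vv$ and invoking the Payne--Weinberger inequality produces the $h_T/\pi$ factor. Cauchy--Schwarz on the second term, followed by a discrete Cauchy--Schwarz over $\Th$, then yields $R(\vu-\uh) \le \mathcal{E}\,\nor{\GRAD(\vu-\uh)}$ with $\mathcal{E}$ the bracketed square root appearing in \eqref{eq:estimate_basic}. Finally, the left inequality in \eqref{eq:nor_en_prop} gives $\nor{\GRAD(\vu-\uh)} \le \sqrt 2\, C_{\rm mon}^{-1}\nor{\vu-\uh}_{\rm en}$ (with $C_K=\sqrt 2$ owing to \eqref{eq:ne.dirichlet}); substituting into the dual bound and dividing by $\nor{\vu-\uh}_{\rm en}$ produces the claimed constant $\sqrt 2\, C_{\rm gro}\, C_{\rm mon}^{-3}$.

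The most delicate point is the asymmetry of $\msh$: since the AFW reconstruction is only \emph{weakly} symmetric, the identity $(\msh, \GRADs\vv) = (\msh, \GRAD\vv)$ is not available in general, and pairing $\msh^{\rm skew}$ with the non-polynomial antisymmetric gradient of $\vu$ would introduce an uncontrolled contribution. Exploiting the pointwise symmetry of $\msuh$ \emph{before} inserting $\msh$ is precisely what circumvents this issue and makes the AFW-based reconstruction admissible for the energy-norm a posteriori estimate.
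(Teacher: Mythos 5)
Your proof is correct and follows the same route as the paper: bound the energy norm of the error by the residual via the growth and strong-monotonicity assumptions together with the Korn constant $C_K=\sqrt2$, split the residual by inserting the equilibrated $H(\mathrm{div})$-conforming reconstruction $\msh$ and integrating by parts, then conclude with the Payne--Weinberger (Poincar\'e) inequality using the local equilibrium property \eqref{eq:data_osc} and Cauchy--Schwarz. Your observation that the pointwise symmetry of $\msuh$ is what allows the replacement $\GRADs\vv\mapsto\GRAD\vv$ \emph{before} inserting the only weakly symmetric Arnold--Falk--Winther reconstruction is precisely the (briefly stated) mechanism the paper relies on, and you are right that it is essential here.
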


\begin{remark}[Constants $C_{\rm gro}$ and $C_{\rm mon}$]
  For the estimate to be computable, the constants $C_{\rm gro}$ and $C_{\rm mon}$ have to be specified. 
  For the linear elasticity model \eqref{eq:LinCauchy} we set $C_{\rm gro} \eqbydef2\mu+d\lambda$ and $C_{\rm mon}\eqbydef\sqrt{2\mu}$, whereas for the Hencky--Mises model \eqref{eq:HenckyMises} we set $C_{\rm gro}\eqbydef2\tilde\mu(0)+d\tilde\lambda(0)$ and $C_{\rm mon}\eqbydef\sqrt{2\tilde\mu(0)}$. For the damage model \eqref{eq:Damage} we take $C_{\rm gro}\eqbydef C^*$ and $C_{\rm mon}\eqbydef\sqrt{C_*}$, where $C_*$ and $C^*$ are the constants appearing in \eqref{eq:unif.elliptic}. 
  Following \cite{fvca8}, we obtain a sharper bound in the case of linear elasticity, with $\mu^{-\nicefrac12}$ instead of $\sqrt2C_{\rm gro}C_{\rm mon}^{-3}$ in \eqref{eq:estimate_basic}.
\end{remark}

\begin{proof}[Proof of Theorem~\ref{thm:estimate_basic}]
We start by bounding the energy norm of the error by the dual norm of the residual of the weak formulation~\eqref{eq:weak}. 
Using \eqref{eq:nor_en_prop}, \eqref{eq:hypo.smon}, the linearity of $a$ in its second argument, and \eqref{eq:weak} we obtain
\begin{align*}
  \nor{\vu-\uh}_{\rm en}^2 
  &\le C_{\rm gro} \nor{\GRADs{(\vu-\uh)}}^2
  \le C_{\rm gro}C_{\rm mon}^{-2} \lvert a(\vu,\vu-\uh) -a(\uh,\vu-\uh)\rvert\\
  & = C_{\rm gro}C_{\rm mon}^{-2} \nor{\GRAD{(\vu-\uh)}}\left\vert a\left(\vu,\frac{\vu-\uh}{\nor{\GRAD{(\vu-\uh)}}}\right) -  a\left(\uh,\frac{\vu-\uh}{\nor{\GRAD{(\vu-\uh)}}}\right) \right\vert\\
  &\le C_{\rm gro} C_{\rm mon}^{-3}C_K\nor{\vu-\uh}_{\rm en} \sup_{\vv\in\Hozd, \nor{\GRAD{\vv}}=1} \{ a(\vu,\vv) - a(\uh,\vv) \}\\
  & = C_{\rm gro}C_{\rm mon}^{-3}C_K \nor{\vu-\uh}_{\rm en} \sup_{\vv\in\Hozd, \nor{\GRAD{\vv}}=1} \{ (\vf,\vv) - (\msuh,\GRADs{\vv}) \}.
\end{align*}
and thus
\begin{equation}
  \label{eq:nor_en_sup}
  \nor{\vu-\uh}_{\rm en}\le C_{\rm gro} C_{\rm mon}^{-3}C_K \sup_{\vv\in\Hozd, \nor{\GRAD{\vv}}=1} \{ (\vf,\vv) - (\msuh,\GRADs{\vv}) \}.
\end{equation}
Note that, due to the symmetry of $\ms$ we can replace $\GRADs\vv$ by $\GRAD\vv$ in the second term inside the supremum.
Now fix $\vv\in\Hozd$, such that $\nor{\GRAD{\vv}} = 1$. 
Since $\msh\in\Hdiv$, we can insert $(\DIV\msh,\vv)+(\msh,\GRAD{\vv})=0$ into the term inside the supremum and obtain
\begin{equation}
 \label{eq:split_res}
 (\vf,\vv)-(\msuh,\GRAD{\vv}) = (\vf+\DIV\msh,\vv)+(\msh-\msuh,\GRAD{\vv}).
\end{equation}
For the first term of the right hand side of \eqref{eq:split_res} we obtain, using \eqref{eq:data_osc} on each $T\in\Th$ to insert $\vec\Pi^{0}_T\vv$, which denotes the $\vec L^2$-projection of $\vv$ onto $[\IP[0](T)]^d$, the Cauchy--Schwarz inequality and the Poincar\'e inequality on simplices,
\begin{equation}
   \label{eq:proof_res}
    \bigl| (\vf+\DIV\msh,\vv) \bigr| 
\le  \Bigl| \sum_{T\in\Th} (\vf+\DIV\msh,\vv-\vec\Pi^{0}_T\vv)_T  \Bigr|
\le \sum_{T\in\Th}\frac{h_T}{\pi} \nor{\vf +\DIV\msh}_T\nor{\GRAD{\vv}}_T,
\end{equation}
whereas the Cauchy--Schwarz inequality applied to the second term directly yields
\begin{equation*}
     \bigl| (\msh-\msuh,\GRAD{\vv})  \bigr|
\le  \sum_{T\in\Th} \nor{\msh-\msuh}_T\nor{\GRAD{\vv}}_T.
\end{equation*}
Inserting these results in \eqref{eq:nor_en_sup} and again applying the Cauchy--Schwarz inequality yields the result.
\end{proof}


\subsection{Distinguishing the different error components}\label{sec:a_post_distinguishing}

The goal of this section is to elaborate the error estimate~\eqref{eq:estimate_basic} so as to distinguish different error components using the equilibrated stress tensors of Constructions~\ref{const:disc} and~\ref{const:lin}. 
This distinction is essential for the development of Algorithm \ref{algo:algo}, where the mesh and the stopping criteria for the iterative solver are chosen adaptively.

Notice that in Theorem~\ref{thm:estimate_basic} we don't necessarily need $\msh$ to be the stress tensor obtained in Construction~\ref{const:BDM}. We only need it to satisfy two properties: First, equation \eqref{eq:split_res} requires  $\msh$ to lie in $\Hdiv$. Second, in order to be able to apply the Poincar\'e inequality in \eqref{eq:proof_res}, $\msh$ has to satisfy the local equilibrium relation
\begin{equation}
  \label{eq:equilibrium}
 (\vf-\DIV\msh,{\vv})_T=0 \quad \forall {\vv}\in[\IP[0](T)]^d ~ \forall T\in\Th.
\end{equation}
Thus, the theorem also holds for $\msh\eqbydef\mshdisc+\mshlin$, where $\mshdisc$ and $\mshlin$ are defined in Constructions~\ref{const:disc} and~\ref{const:lin} and we obtain the following result.

\begin{theorem}[A posteriori error estimate distinguishing different error sources]\label{thm:estimate_distinguishing}
 Let $\vu$ be the analytical solution of \eqref{eq:weak}, $\uhk$ the discrete solution of \eqref{eq:Newton}, and $\msh\eqbydef\mshdisc+\mshlin$.
Then, 
\begin{equation}\label{eq:estimate_distinguishing}
 \nor{\vu-\uhk}_\mathrm{en} \le\sqrt2C_{\rm gro}C_{\rm mon}^{-3}
 \big( \eta_{{\rm disc}}^{k}+\eta_{{\rm lin}}^{k}+\eta_{{\rm quad}}^{k}+\eta_{{\rm osc}}^{k} \big),
\end{equation}
where the local discretization, linearization, quadrature and oscillation error estimators on each $T\in\Th$ are defined as
\begin{subequations}\label{eq:etas}
\begin{align}
 \eta_{{\rm disc},T}^{k} &\eqbydef \nor{\mshdisc-\omsu}_T,  \label{eq:eta_disc} \\
 \eta_{{\rm lin},T}^{k}    &\eqbydef \nor{\mshlin}_T,  \label{eq:eta_lin} \\
 \eta_{{\rm quad},T}^{k}&\eqbydef \nor{\omsu-\msu[\uhk]}_T,  \label{eq:eta_quad} \\
 \eta_{{\rm osc},T}^{k}  &\eqbydef \frac{h_T}{\pi}\nor{\vf-\vec\Pi^{p-1}_T\vf}_T, \label{eq:eta_osc}
\end{align}
\end{subequations}
with $\vec\Pi^{p-1}_T$ denoting the $\vec L^2$-projection onto $[\IP[p-1](T)]^d$, and for each error source the global estimator is given by
\begin{equation}
 \label{eq:eta_glob}
 \eta_{\cdot}^k \eqbydef \Big( 4\sum_{T\in\Th} (\eta_{\cdot,T}^k)^2\Big)^{\nicefrac12}.
\end{equation}
\end{theorem}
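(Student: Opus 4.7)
My plan is as follows. The remark just above the theorem already identifies the key observation: the derivation of the basic bound \eqref{eq:estimate_basic} only uses two properties of $\msh$, namely $\msh\in\Hdiv$ and the local equilibrium \eqref{eq:equilibrium}. By Lemma~\ref{lem:distinguishing_properties}, the composite reconstruction $\msh\eqbydef\mshdisc+\mshlin$ satisfies both. I would therefore replay the derivation of Theorem~\ref{thm:estimate_basic} verbatim, with $\uh$ replaced by $\uhk$ and $\msuh$ replaced by $\msu[\uhk]$ throughout. This is legitimate because that argument never uses the discrete equation, only the analytical weak formulation \eqref{eq:weak}, the strong monotonicity \eqref{eq:hypo.smon}, Korn, and the dualization of the residual through a supremum over $\vv\in\Hozd$ with $\nor{\GRAD\vv}=1$. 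The task is thereby reduced to bounding the two local quantities $\tfrac{h_T}{\pi}\nor{\vf+\DIV\msh}_T$ and $\nor{\msh-\msu[\uhk]}_T$ in terms of the four estimators in \eqref{eq:etas}.

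For the first quantity I would exploit a polynomial-degree match. Each row of $\msh$ is a BDM polynomial of degree $q=p$ on $T$, hence $\DIV\msh\vert_T\in[\IP[p-1](T)]^d=\vV[T]$. Testing the local equilibrium of Lemma~\ref{lem:distinguishing_properties} against $\vec\Pi^{p-1}_T\vf+\DIV\msh\in\vV[T]$, and invoking the defining orthogonality of $\vec\Pi^{p-1}_T$, one finds $\nor{\vec\Pi^{p-1}_T\vf+\DIV\msh}_T^2=0$, i.e.\ $\DIV\msh=-\vec\Pi^{p-1}_T\vf$ on $T$. Consequently $\nor{\vf+\DIV\msh}_T=\nor{\vf-\vec\Pi^{p-1}_T\vf}_T$, and the first local quantity coincides exactly with $\eta_{\rm osc,T}^{k}$. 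For the second quantity I would insert $\omsu$ twice and apply the triangle inequality:
\[
\nor{\msh-\msu[\uhk]}_T \le \nor{\mshdisc-\omsu}_T + \nor{\mshlin}_T + \nor{\omsu-\msu[\uhk]}_T = \eta_{\rm disc,T}^{k}+\eta_{\rm lin,T}^{k}+\eta_{\rm quad,T}^{k}.
\]

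To conclude, I would apply the elementary inequality $(a_1+a_2+a_3+a_4)^2\le 4(a_1^2+a_2^2+a_3^2+a_4^2)$ inside the element-wise sum inherited from \eqref{eq:estimate_basic}, swap the two summations, and pull the square root through to reveal the four global quantities $\eta_\cdot^k$ defined by \eqref{eq:eta_glob}; a final application of $(y_1^2+\dots+y_4^2)^{\nicefrac12}\le y_1+\dots+y_4$ produces the sum of global estimators on the right-hand side of \eqref{eq:estimate_distinguishing}. The only genuinely subtle step, in my view, is the polynomial-degree match that converts $\nor{\vf+\DIV\msh}_T$ exactly into the data oscillation $\nor{\vf-\vec\Pi^{p-1}_T\vf}_T$ (a refinement made possible by choosing $q=p$ in the Arnold--Falk--Winther space); everything else is bookkeeping and triangle inequalities.
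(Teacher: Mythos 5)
Your proposal is correct and follows essentially the same route as the paper: apply Theorem~\ref{thm:estimate_basic} with $\msh=\mshdisc+\mshlin$ (justified by Lemma~\ref{lem:distinguishing_properties}), identify the first local term with $\eta_{\rm osc,T}^k$ via the orthogonality $(\vf+\DIV\msh,\vv)_T=0$ for $\vv\in\vV[T]$, split the second local term with a double insertion of $\omsu$ and the triangle inequality, and finish with $(\sum_i a_i)^2\le 4\sum_i a_i^2$ followed by $\ell^2\le\ell^1$. The only difference is expository: you spell out the polynomial-degree match showing $\DIV\msh\vert_T=-\vec\Pi^{p-1}_T\vf$ (and hence $\nor{\vf+\DIV\msh}_T=\nor{\vf-\vec\Pi^{p-1}_T\vf}_T$), a step the paper states but does not detail.
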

\begin{proof}
  Using $\msh\eqbydef\mshdisc+\mshlin$ in Theorem~\ref{thm:estimate_basic}, we obtain 
\begin{equation*}
 \nor{\vu-\uhk}_\mathrm{en} \le \sqrt2C_{\rm gro}C_{\rm mon}^{-3} \left(\sum_{T\in\Th} \big(\frac{h_T}{\pi}\nor{\vf+\DIV(\mshdisc+\mshlin)}_T +  \nor{\mshlin+\mshdisc-\msu[\uhk]}_T  \big)^2\right)^{\nicefrac12}.
\end{equation*}
  Applying the second property of Lemma~\ref{lem:distinguishing_properties} in the first term yields the oscillation error estimator. In the second term we add and substract $\omsu$ and apply the triangle inequality to obtain 
\begin{equation*}
    \nor{\vu-\uhk}_\mathrm{en} \le\sqrt2C_{\rm gro}C_{\rm mon}^{-3} \left(\sum_{T\in\Th} \big( 
 \eta_{{\rm disc},T}^{k}+\eta_{{\rm lin},T}^{k}+\eta_{{\rm quad},T}^{k}+\eta_{{\rm osc},T}^{k} \big)^2\right)^{\nicefrac12}.
\end{equation*}
Owing to \eqref{eq:eta_glob}, the previous bound yields the conclusion.
\end{proof}

\begin{remark}[Quadrature error]
In practice, the projection $\omsu$ of $\msu[\uhk]$ onto $[\IP[p-1](\Th)]^{d\times d}$ for a general nonlinear stress-strain relation cannot be computed exactly. 
The quadrature error estimator $\eta_{{\rm quad},T}^{k}$ measures the quality of this projection.
\end{remark}


\subsection{Adaptive algorithm}
\label{sec:algorithm}

Based on the error estimate of Theorem~\ref{thm:estimate_distinguishing}, we propose an adaptive algorithm where the mesh size is locally adapted, and a dynamic stopping criterion is used for the linearization iterations.
The idea is to compare the estimators for the different error sources with each other in order to concentrate the computational effort on  reducing the dominant one.
For this purpose, let $\gamma_{\mathrm{lin}},\gamma_{\mathrm{quad}}\in(0,1)$, be user-given weights and $\Gamma>0$ a chosen threshold that the error should not exceed. 

\begin{algo}[Adaptive algorithm] \label{algo:algo} 
\end{algo}
    \textbf{Mesh adaptation loop}
    \begin{enumerate}
        \item Choose an initial function $\uh^0\in\Hozd\cap[\IP[p](\Th)]^d$ and set $k\eqbydef1$
        \item Set the initial quadrature precision $\nu\eqbydef2p$ (exactness for polynomials up to degree $\nu$)
        \item \textbf{Linearization iterations}
        \begin{enumerate}
           \item Calculate $\mslinea$, $\uh^{k}$, $\msu[\uhk]$ and $\omsu$
           \item Calculate the stress reconstructions $\mshdisc$ and $\mshlin$ and the error estimators 
                 $\eta_{\rm{disc}}^k$, $\eta_{\rm{lin}}^k$, $\eta_{\rm{osc}}^k$ and $\eta_{\rm{quad}}^k$
           \item Improve the quadrature rule (setting $\nu\eqbydef\nu+1$) and go back to step $3$(a) until
             \begin{alignat}{1} 
                \label{eq:crit_quad}
               \eta_{\rm{quad}}^k \le \gamma_{\rm{quad}}(\eta_{\mathrm{disc}}^k+\eta_{\mathrm{lin}}^{k}+\eta_{\mathrm{osc}}^k)
             \end{alignat}
           \item \textbf{End of the linearization loop} if  
            \begin{alignat}{1}
               \label{eq:crit_lin}
              \eta_{\mathrm{lin}}^{k} \leq \gamma_{\mathrm{lin}}(\eta_{\mathrm{disc}}^{k}+\eta_{\mathrm{osc}}^k)            
            \end{alignat}
        \end{enumerate}
        \item Refine or coarsen the mesh $\Th$ such that the local discretization error estimators 
              $\eta_{{\mathrm{disc}},T}^k$ are distributed evenly
    \end{enumerate}
    \textbf{End of the mesh adaptation loop} if $\eta_{\mathrm{disc}}^k + \eta_{\mathrm{osc}}^k \leq  \Gamma$\\
   
Instead of using the global stopping criteria \eqref{eq:crit_quad} and \eqref{eq:crit_lin}, which are evaluated over all mesh elements, we can also define the local criteria
\begin{subequations}\label{eq:crit_local}
 \begin{align}
  \eta_{\mathrm{quad},T}^k &\le \gamma_{\mathrm{quad}}(\eta_{\mathrm{disc},T}^k+\eta_{\mathrm{lin},T}^{k}+\eta_{\mathrm{osc},T}^k) 
  &\forall T\in\Th,
  \\
  \eta_{\mathrm{lin},T}^{k}&\le\gamma_{\mathrm{lin}}(\eta_{\mathrm{disc},T}^{k}+\eta_{\mathrm{osc},T}^k) &\forall T\in\Th,       
 \end{align}
\end{subequations}
where it is also possible to define local weights $\gamma_{\mathrm{lin},T}$ and $\gamma_{\mathrm{quad},T}$ for each element. These local stopping criteria are necessary to establish the local efficiency of the error estimators in the following section, whereas the global criteria are only sufficient to prove global efficiency.


\subsection{Local and global efficiency}\label{sec:efficiency}

To prove efficiency, we will use a posteriori error estimators of residual type. Following \cite{Ver96,Ver99} we define for $X\subseteq\Omega$ the functional $\mathcal R_X:\Hod[X]\to \vec H^{-1}(X)$ such that, for all $\vv\in\Hod[X],\vw\in\Hozd[X]$,
$$
   \langle \mathcal R_X(\vv),\vw \rangle_X \eqbydef (\msu[\vv],\GRADs{\vw})_X - (\vf,\vw)_X.
$$
In what follows we let $a\lesssim b$ stand for $a\le C b$ with a generic constant C, which is independent of the mesh size, the domain $\Omega$ and the stress-strain relation, but that can depend on the shape regularity of the mesh family $\{\Th\}_h$ and on the polynomial degree $p$.

Define, for each $T\in\Th$, 
\begin{equation}\label{eq:residual_estimators}
  \begin{aligned}
  (\eta_{\sharp,T}^{k})^2 &\eqbydef h_T^2 \nor{\DIV\omsu[\uhk]+\vec\Pi^{p}_T\vf}_{T}^2
  +\sum_{F\in\Fh[T]^{\rm{i}}}h_F \nor{\jump{ \omsu[\uhk]\vec n_F} }_F^2,
  \\
  (\eta_{\flat,T}^{k})^2 &\eqbydef h_T^2 \nor{\DIV(\msu[\uhk]-\omsu[\uhk])}_{T}^2 
  +\sum_{F\in\Fh[T]^{\rm{i}}}h_F \nor{\jump{(\msu[\uhk]-\omsu[\uhk])\vec n_F} }_F^2.
  \end{aligned}
\end{equation}
The quantity $\eta_{\flat,T}^{k}$ obviously measures the quality of the approximation of $\msu[\uhk]$ by $\omsu[\uhk]$ and can be estimated explicitly.
The following result is shown in \cite[Section 3.3]{Ver96}.
Denoting for any $T\in\Th$ by $\Th[T]$ the set of elements sharing an edge (if $d=2$) or a face ($d=3$) with $T$, it holds
\begin{equation}\label{eq:Verfuerth_pre}
 \eta_{\sharp,T}^{k}\lesssim \nor{\mathcal R_{\Th[T]}(\uhk)}_{\vec H^{-1}(\Th[T])} 
 +\Bigl( \sum_{T'\in\Th[T]} (\eta_{\flat,T'}^{k}+\eta_{{\rm osc},T'}^k)^2\Bigr)^{\nicefrac12}.
\end{equation}
In order to bound the dual norm of the residual, we need an additional assumption on the stress-strain relation which, in particular, implies the growth assumption \eqref{eq:hypo.gr}.
\begin{assumption}[Stress-strain relation II]
  \label{ass:hypo_add}
There exists a real number $C_{\rm Lip}\in(0,+\infty)$ such that, for all $\matr{\tau},\matr{\eta}\in\Real^{d\times d}_{\mathrm{sym}}$,
  \label{eq:hypo_add}
  \begin{equation} 
    \label{eq:hypo.lip}
    \norm[d\times d]{\ms(\matr{\tau})-\ms(\matr{\eta})}\le C_{\rm Lip}\norm[d\times d]{\matr{\tau}-\matr{\eta}}.
    \quad \text{(Lipschitz continuity)}
  \end{equation}
\end{assumption}
Notice that the three stress-strain relations presented in Examples \ref{ex:lin.cauchy}, \ref{ex:hencky}, and \ref{ex:damage} satisfy the previous Lipschitz continuity assumptions.
Owing to the definition of the functional $\mathcal R_{\Th[T]}$ and to the fact that $-\DIV\msu=\vf\in \vec L^2(\Th[T])$, using the Cauchy--Schwarz inequality and the Lipschitz continuity~\eqref{eq:hypo.lip} of $\ms$, it is inferred that
$$
\begin{aligned}
\nor{\mathcal R_{\Th[T]}(\uhk)}_{\vec H^{-1}(\Th[T])} &\eqbydef 
\sup_{\vw\in\vec H^{-1}(\Th[T]),\;\nor{\vw}_{\vec H_0^1(\Th[T])}\le 1} 
(\msu[\uhk],\GRADs\vw)_{\Th[T]} -(\vf,\vw)_{\Th[T]} 
\\
&=\sup_{\vw\in\vec H^{-1}(\Th[T]),\;\nor{\vw}_{\vec H_0^1(\Th[T])}\le 1} 
(\msu[\uhk]-\msu,\GRADs\vw)_{\Th[T]}
\\
&\le\sup_{\vw\in\vec H^{-1}(\Th[T]),\;\nor{\vw}_{\vec H_0^1(\Th[T])}\le 1}
\nor{\msu[\uhk]-\msu}_{\Th[T]} \nor{\GRADs\vw}_{\Th[T]}
\\
&\le C_{\rm Lip}\nor{\GRADs(\vu-\uhk)}_{\Th[T]}.
\end{aligned}
$$
Thus, by \eqref{eq:Verfuerth_pre}, the previous bound, and the strong monotonicity \eqref{eq:hypo.smon} it holds
\begin{equation}\label{eq:Verfuerth}
 \eta_{\sharp,T}^{k}\lesssim C_{\rm Lip}C_{\rm mon}^{-1}\nor{\vu-\uhk}_{\rm{en},\Th[T]}+\eta_{\flat,\Th[T]}^{k}+\eta_{\rm{osc},\Th[T]}^k,
\end{equation}
where $\eta_{\cdot,\Th[T]}^{k}\eqbydef \bigl\{2\sum_{T'\in\Th[T]}(\eta_{\cdot,T'}^{k})^2\bigr\}^{\nicefrac12}$.

\begin{theorem}[Local efficiency]\label{thm:local_efficiency}
 Let $\vu\in\Hozd$ be the solution of \eqref{eq:weak}, $\uhk\in\Hozd\cap[\IP[p](\Th)]^d$ be arbitrary and $\mshdisc$ and $\mshlin$ defined by Constructions~\ref{const:disc} and \ref{const:lin}. Let the local stopping criteria \eqref{eq:crit_local} be verified. Then it holds for all $T\in\Th$,
\begin{equation}\label{eq:local_efficiency}
 \eta_{{\rm disc},T}^{k}+\eta_{{\rm lin},T}^{k} +\eta_{{\rm quad},T}^{k} +\eta_{{\rm osc},T}^{k}  \lesssim C_{\rm Lip}C_{\rm mon}^{-1}\nor{\vu-\uhk}_{\rm{en},\Th[T]}
+\eta_{\flat,\Th[T]}^{k} +\eta_{\mathrm{osc},\Th[T]}^{k}.
\end{equation}
\end{theorem}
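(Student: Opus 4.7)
The plan is to first exploit the local stopping criteria \eqref{eq:crit_local} to reduce the task to controlling $\eta_{{\rm disc},T}^{k}$ and $\eta_{{\rm osc},T}^{k}$ only. Substituting the second inequality of \eqref{eq:crit_local} into the first yields
\[
\eta_{{\rm lin},T}^{k}+\eta_{{\rm quad},T}^{k}\le\bigl(\gamma_{\rm lin}+\gamma_{\rm quad}(1+\gamma_{\rm lin})\bigr)\bigl(\eta_{{\rm disc},T}^{k}+\eta_{{\rm osc},T}^{k}\bigr),
\]
so that, since $\gamma_{\rm lin},\gamma_{\rm quad}\in(0,1)$, the entire left-hand side of \eqref{eq:local_efficiency} is dominated up to a generic constant by $\eta_{{\rm disc},T}^{k}+\eta_{{\rm osc},T}^{k}$. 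The oscillation estimator already appears on the right through $\eta_{{\rm osc},\Th[T]}^{k}$, so the task reduces to bounding $\eta_{{\rm disc},T}^{k}=\nor{\mshdisc-\omsu}_T$.

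For this, I would decompose the discretization estimator over the vertex patches meeting $T$. Using $\mshdisc=\sum_{a\in\VT}\sha$ on $T$ together with the partition of unity $\sum_{a\in\VT}\psi_a\equiv 1$ on $T$ gives
\[
\nor{\mshdisc-\omsu}_T\le\sum_{a\in\VT}\nor{\sha-\psi_a\omsu}_{\oma}.
\]
For each vertex $a$, the triple $(\sha,\vec r_h^a,\matr\lambda_h^a)$ solves the local mixed system of Construction~\ref{const:disc}, which is inf-sup stable on $\oma$ with a constant depending only on the shape regularity of $\Ta$ (this is the Arnold--Falk--Winther stability theory applied patchwise). Applying this stability to $\sha-\psi_a\omsu$, treating $\psi_a\omsu$ as an approximate solution whose divergence defect on elements and non-zero normal jumps across internal faces provide the residual, and then using integration by parts element by element together with standard trace and discrete Poincar\'e--Friedrichs inequalities, yields
\[
\nor{\sha-\psi_a\omsu}_{\oma}\lesssim \sum_{T'\in\Ta}\Bigl(h_{T'}\nor{\DIV\omsu+\vec\Pi^{p}_{T'}\vf}_{T'}+\sum_{F\in\Fh[T']^{\rm{i}}}h_F^{\nicefrac12}\nor{\jump{\omsu\vec n_F}}_F+\eta_{{\rm osc},T'}^k\Bigr),
\]
where the rigid-body correction $\vec y_{\rm disc}^k$ from \eqref{eq:ydisc} produces a term absorbed into the right-hand side thanks to its very definition (which restores Neumann compatibility). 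Summing over $a\in\VT$ and recognising the residual estimator of \eqref{eq:residual_estimators} then gives $\eta_{{\rm disc},T}^{k}\lesssim\eta_{\sharp,\Th[T]}^{k}+\eta_{{\rm osc},\Th[T]}^{k}$.

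Plugging this into the already-established bound \eqref{eq:Verfuerth} converts $\eta_{\sharp,\Th[T]}^{k}$ into $C_{\rm Lip}C_{\rm mon}^{-1}\nor{\vu-\uhk}_{{\rm en},\Th[T]}+\eta_{\flat,\Th[T]}^{k}+\eta_{{\rm osc},\Th[T]}^{k}$, which is precisely the right-hand side of \eqref{eq:local_efficiency}. The hardest step is the patchwise stability bound in the second paragraph: although the overall strategy mirrors the classical Raviart--Thomas-based efficiency arguments of \cite{BraePilSchoe09,ErnVoh15}, two features complicate matters here. First, the weak symmetry of the Arnold--Falk--Winther space forces one to control the skew Lagrange multiplier $\matr\lambda^a_h$ jointly with $\sha$, so that one must work with the full triple in the inf-sup argument rather than with $\sha$ alone. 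Second, the different boundary conditions on $\partial\oma$ for interior versus boundary vertices, together with the rigid-body compensation $\vec y_{\rm disc}^k$ needed for Neumann compatibility at interior vertices, must be tracked carefully so that the hidden constants depend only on the minimum-angle regularity of $\Th$.
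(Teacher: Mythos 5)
Your proof reproduces the overall architecture of the paper's argument correctly: you use the local stopping criteria \eqref{eq:crit_local} to reduce the left-hand side of \eqref{eq:local_efficiency} to $\eta_{{\rm disc},T}^k+\eta_{{\rm osc},T}^k$ (your absorption inequality is the same calculation, written out), you identify that the heart of the matter is the local approximation property $\eta_{{\rm disc},T}^k\lesssim\eta_{\sharp,\Th[T]}^k+\eta_{{\rm osc},\Th[T]}^k$, and you correctly conclude by invoking \eqref{eq:Verfuerth}. The patch decomposition $\nor{\mshdisc-\omsu}_T\le\sum_{a\in\VT}\nor{\sha-\psi_a\omsu}_{\oma}$ is also exactly the paper's first move.

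The gap is in the step where you bound $\nor{\sha-\psi_a\omsu}_{\oma}$. You appeal to ``Arnold--Falk--Winther inf-sup stability applied patchwise,'' treating $\psi_a\omsu$ as an approximate solution. This does not directly work as stated, for two reasons. First, AFW inf-sup stability is formulated in the $\Hdiv[\oma]$ norm for the stress variable, and $\nor{\DIV\cdot}_{\oma}$ scales like $h^{-1}$ relative to $\nor{\cdot}_{\oma}$: applying this stability naively produces a bound that is off by a factor of $h^{-1}$ from the $h$-weighted residual quantities defining $\eta_{\sharp,T}^k$ in \eqref{eq:residual_estimators}, so you would need a specific, mesh-weighted local inf-sup result rather than the standard AFW well-posedness theory. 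Second, $\psi_a\omsu$ has normal jumps across internal faces and therefore does not belong to $\mS[h]^a\subset\Hdiv[\oma]$, so $\sha-\psi_a\omsu$ is not an admissible ``error'' for a straightforward stability estimate; you would need to either project $\psi_a\omsu$ onto $\mS[h]^a$ and control the projection error separately, or pass to the hybridized (broken) formulation --- which is precisely what the paper does. The paper's route is to hybridize (formulations \eqref{eq:stress_equiv} and \eqref{eq:stress_equiv_hyb}), construct from the hybrid variables a function $\tilde{\vec r}_h^a$ lying in the nonconforming space $\vM[h]^a$ equivalent to the BDM mixed method (Proposition~\ref{prop:rha}), exploit the identity $\matr\Pi_{\tmST}(\GRAD\tilde{\vec r}_h^a)|_T=(\sha-\psi_a\omsu)|_T$ to get $\nor{\sha-\psi_a\omsu}_{\oma}\le\nor{\GRAD\tilde{\vec r}_h^a}_{\oma}$ for free, and then invoke the specific dual-norm lemma from~\cite{Voh10} to convert $\nor{\GRAD\tilde{\vec r}_h^a}_{\oma}$ into a supremum that can be computed by elementwise integration by parts. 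Your sketch does name the right auxiliary tools (normal jumps, elementwise integration by parts, discrete Poincar\'e--Friedrichs inequalities), so the final computation would look the same, but the mechanism connecting the $L^2$ distance $\nor{\sha-\psi_a\omsu}$ to those residual quantities is the nontrivial ingredient, and ``patchwise AFW inf-sup stability'' does not supply it; the nonconforming-space device is essential and must be made explicit.
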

 It is well known that there exist nonconforming finite element methods which are equivalent to mixed finite element methods using the Brezzi--Douglas--Marini spaces (see e.g. \cite{ArbChe95}). 
 Following the ideas of~\cite{ErnVoh13,HanSteVoh12,ElAErnVoh11} and references therein, we use these spaces to prove Theorem~\ref{thm:local_efficiency}.
We will denote by $\vM[h](\oma)$ the extension to vector valued functions of the nonconforming space introduced in \cite{ArbChe95} on a patch $\oma$. Recall that $\mS[h](\oma)$ is the subspace of $\Hdiv$ containing tensor-valued piecewise polynomials of degree at most $p$. 

For $d=2$, the space $\vM[T]$ on a triangle $T\in\Th$ is given by
 \begin{equation}\label{eq:MT}
 \vM[T]\eqbydef \left\{ \begin{array}{rll} 
    \{ \vv\in[\IP[p+2](T)]^d &\vert~ \restrto{\vv}{F}\in[[\IP[p+1](F)]^d ~~\forall F\in\Fh[T]\} & \text{if $p$ is even,} \\
    \{ \vv\in[\IP[p+2](T)]^d &\vert~ \restrto{\vv}{F}\in[\IP[p](F)]^d\oplus \tvIP[p+2](F)~ ~\forall F\in\Fh[T]\} & \text{if $p$ is odd,}
  \end{array} \right.
 \end{equation} 
 where $\tvIP[p+2](F)$ is the $\vec L^2(F)$--orthogonal complement of $[\IP[p+2](F)]^d$ in $[\IP[p+1](F)]^d$. The degrees of freedom are given by the moments up to degree $(p-1)$ inside each $T\in\Th$ and up to degree $p$ on each edge $F\in\Fh$. On a patch $\omega_a$ this means that $\vM[h](\oma)$ contains vector-valued functions lying piecewise in $\vM[T]$ such that
 \begin{equation}
   \label{eq:Mh_prop1}
   (\jump{\vmh},\vh)_F = 0 \quad \forall F\in\Fh[a]\setminus\Fh^{\rm{ext}} ~~ \forall \vh\in[\IP[p](F)]^d,
 \end{equation}
 where $\Fh[a]$ contains the faces in $\Fh$ to which $a$ belongs, and $\Fh^{\rm{ext}}$ the faces lying on $\partial\Omega$. We will denote by $\vM[h]^a$ the subspace of $\vM[h](\oma)$ with functions $\vmh$ verifying
 \begin{equation} \label{eq:Mh_prop2int}
   (\vmh,\vz)_{\oma} = 0 \quad \forall \vz\in\vec{RM}^d,
 \end{equation}
 if $a\in\Vint$, and
 \begin{equation} \label{eq:Mh_prop2ext}
   ( \vmh, \vh )_F = 0 \quad \forall F\in\Fh[a]\cap\Fh^{\rm{ext}} ~~\forall \vh\in[\IP[p](F)]^d,
 \end{equation}
 if $a\in\Vext$.

 We will use the space $\vM[h]^a$ together with Proposition~\ref{prop:rha} to prove Theorem~\ref{thm:local_efficiency}. 
 For Proposition~\ref{prop:rha} we introduce two equivalent formulations of Construction~\ref{const:disc} based on the following spaces
 \begin{align}
    \tmST&\eqbydef \{\mt\in\mS[T]~\vert~ (\mt,\matr\mu)_{T}=0 ~ \forall\matr\mu\in\matr\Lambda_T\},\\
    \tilde{\matr\Sigma}_{h}(\oma) &\eqbydef \{\mth\in\Ltdd~\vert~ \mth\in\tmST~\forall T\in\Ta\},\\
    \tmS &\eqbydef \mS[h]^a\cap\tilde{\matr\Sigma}_h(\oma) = \{\mth\in\mS[h]^a~\vert~ (\mth,\matr\mu_h)_{\oma}=0 ~ \forall\matr\mu_h\in\matr\Lambda_h^a\},\\
    \vec L_h^a &\eqbydef \{\vec l_h\in[\IP[p](\Fh[\oma])]^d ~\vert~\begin{array}[t]{l} \vec l_h = \vec 0 \text{ on }\partial\oma\text{ if } a \in\Vint, \\  \vec l_h = \vec 0 \text{ on }\partial\oma\setminus\partial\Omega\text{ if } a \in\Vext\},\end{array}
\end{align}
 where $\Fh[\oma]$ collects the faces of the patch.
 The first equivalent formulation of Construction~\ref{const:disc} consists in finding $\sha\in\tmS$ and $\vec r_h^a\in\vV[h]^a$ such that for all $(\mth,\vh)\in\tmS\times\vV[h]^a$
    \begin{subequations}  \label{eq:stress_equiv}
     \begin{eqnarray}
        (\sha,\mth)_{\oma} + (\vec{r}^a_h,\DIV\mth)_{\oma} &=& (\psi_a\omsu ,\mth)_{\oma} ,\label{eq:stress_equiv1}
        \\
        (\DIV \sha,\vh)_{\oma} &=& (-\psi_a\vf + \omsu\vGRAD\psi_a - \vy[{\rm disc}]^k,\vh)_{\oma}.\label{eq:stress_equiv2}
       \end{eqnarray}
     \end{subequations}
 The second formulation is the first step when hybridizing the mixed problem \eqref{eq:stress_equiv}. Following \cite{ArbChe95} it consists in using the broken space $ \tilde{\matr\Sigma}_{h}(\oma)$ instead of $\tmS$ and imposing the continuity of the normal stress components by Lagrange multipliers. Its solution is $(\sha,\vec r_h^a,\vec l_h^a )\in\tilde{\matr\Sigma}_{h}(\oma)\times\vV[h]^a\times\vec L_h^a$ such that for all $(\mth,\vh,\vec l_h)\in\tilde{\matr\Sigma}_{h}(\oma)\times\vV[h]^a\times\vec L_h^a$
   \begin{subequations}  \label{eq:stress_equiv_hyb}
     \begin{eqnarray}
       &&(\sha,\mth)_{\oma}+\sum_{T\in\Th[a]}(\vec{r}^a_h,\DIV\mth)_{T}
       -\sum_{F\in\Fh[\oma]}(\vec l_h^a,\jump{\mth\vn[F]}_F)_F = (\psi_a\omsu ,\mth)_{\oma} ,\label{eq:stress_equiv_hyb1}
       \\
       &&\sum_{T\in\Th[a]}(\DIV \sha,\vh)_{T}
       = (-\psi_a\vf+\omsu\vGRAD\psi_a-\vy[{\rm disc}]^k,\vh)_{\oma}, \label{eq:stress_equiv_hyb2}
       \\
       &&- \sum_{F\in\Fh[\oma]} (\jump{\sha\vn[F]}_F,\vec l_h)_F = 0,\label{eq:stress_equiv_hyb3}
     \end{eqnarray}
   \end{subequations}
  where we denote by $\vn[TF]$ the outward normal vector of $T$ on $F$ and by $\vn[F]$ the normal vector of $F$ with an arbitrary, but fixed direction.
  In particular, \eqref{eq:stress_equiv_hyb1} can be reformulated as 
  \begin{equation}\label{eq:stress_equiv_hyb_T}
    (\sha-\psi_a\omsu ,{\mt}_T)_T + (\vec{r}^a_h,\DIV{\mt}_T)_{T} = \sum_{F\in\Fh[T]} (\vec l_h^a,{\mt}_T\vn[TF])_F 
    \quad \forall {\mt}_T\in\tmST~\forall T\in\Ta.
  \end{equation}

\begin{proposition}\label{prop:rha}
  Let $a\in\Vh$ and let $(\sha,\vec r_h^a,\vec l_h^a)\in\tilde{\matr\Sigma}_{h}(\oma)\times\vV[h]^a\times\vec L_h^a$ be defined by \eqref{eq:stress_equiv_hyb}. 
  Let $\tilde{\vec r}_h^a$ be a vector-valued function verifying for all $T\in\Th[a]$ and for all $F\in\Fh[T]$,
  \begin{subequations}
  \label{eq:r_tilde}
   \begin{align}
     \restrto{\tilde{\vec r}_h^a}{T} & \in \vM[T],\\
     \vec\Pi_{\vec L_F}\restrto{\tilde{\vec r}_h^a}{F} &= \restrto{\vec l_h^a}{F}, \label{eq:r_tilde_1} \\
     \vec\Pi_{\vV[T]} \restrto{\tilde{\vec r}_h^a}{T} &=\restrto{\vec r_h^a}{T}, \label{eq:r_tilde_2}
   \end{align}
  \end{subequations}
  where $\vec\Pi_{\vec L_F}$  and $\vec\Pi_{\vV[T]}$ denote, respectively, the $L^2$-projections on $\vec L_F = [\IP[p](F)]^d$ and $\vV[T] = [\IP[p-1](T)]^d$. Then $\tilde{\vec r}_h^a\in\vM[h]^a$.
\end{proposition}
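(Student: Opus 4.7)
The plan is to verify the three membership conditions defining $\vM[h]^a$: piecewise membership in $\vM[T]$ on every $T \in \Ta$; the nonconforming continuity condition \eqref{eq:Mh_prop1} across interior faces of the patch; and the additional constraint \eqref{eq:Mh_prop2int} at an interior vertex or \eqref{eq:Mh_prop2ext} at a boundary vertex. The first condition is immediate from the first line of \eqref{eq:r_tilde}, and since the data prescribed by \eqref{eq:r_tilde_1} and \eqref{eq:r_tilde_2} form a unisolvent set of degrees of freedom for $\vM[T]$ (face moments against $[\IP[p](F)]^d$ on each $F \in \Fh[T]$ together with interior moments against $[\IP[p-1](T)]^d$), the function $\tilde{\vec r}_h^a$ is well defined elementwise.

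For the nonconforming continuity, I would take an interior face $F \in \Fh[a] \setminus \Fh^{\mathrm{ext}}$ shared by two elements $T^+, T^- \in \Ta$. Since $\vec l_h^a \in \vec L_h^a$ is single-valued on $F$, applying \eqref{eq:r_tilde_1} from each side yields $\vec\Pi_{\vec L_F}(\restrto{\tilde{\vec r}_h^a}{T^+}) = \restrto{\vec l_h^a}{F} = \vec\Pi_{\vec L_F}(\restrto{\tilde{\vec r}_h^a}{T^-})$ on $F$. Subtracting and testing against any $\vh \in [\IP[p](F)]^d$ gives $(\jump{\tilde{\vec r}_h^a},\vh)_F = 0$, which is precisely \eqref{eq:Mh_prop1}.

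For the additional constraint at an interior vertex $a \in \Vint$, the key observation is that rigid body motions are at most affine, so the assumption $p \geq 2$ gives $\vec{RM}^d \subset [\IP[1](T)]^d \subset \vV[T]$ for every $T \in \Ta$. Then \eqref{eq:r_tilde_2} yields $(\tilde{\vec r}_h^a, \vz)_T = (\vec r_h^a, \vz)_T$ for every $\vz \in \vec{RM}^d$ and every $T$; summing over the patch and invoking the orthogonality $(\vec r_h^a, \vz)_{\oma} = 0$ built into $\vV[h]^a$ via \eqref{eq:space.BDM.V.int} closes this case.

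The boundary vertex case $a \in \Vext$ is the step I expect to require the most care. By \eqref{eq:r_tilde_1}, condition \eqref{eq:Mh_prop2ext} reduces to showing $(\vec l_h^a, \vh)_F = 0$ for every $\vh \in [\IP[p](F)]^d$ and every external face $F \in \Fh[a] \cap \Fh^{\mathrm{ext}}$. The route I would pursue is to exploit the elementwise hybrid identity \eqref{eq:stress_equiv_hyb_T}, choosing test tensors $\mt[T] \in \tmST$ supported on a single element $T \in \Ta$ adjacent to such an $F$ and with normal trace $\mt[T]\vn[TF]$ localized on $F$, combined with the freedom of $\sha\vn$ on $\partial\Omega \cap \partial\oma$ granted by the definition of $\mS[h]^a$ for boundary vertices. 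This should let me isolate the trace $\restrto{\vec l_h^a}{F}$ and identify it with a quantity forced to vanish by the structure of $\vec L_h^a$. This is the delicate portion of the argument, as it requires matching the hybrid Lagrange multiplier to the Arnold--Falk--Winther face degrees of freedom through a careful localization of test functions.
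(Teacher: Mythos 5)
Your check of well-posedness and your argument for the jump condition \eqref{eq:Mh_prop1} are both correct, and your route to \eqref{eq:Mh_prop1} is in fact more direct than the paper's: the paper first plugs \eqref{eq:r_tilde_1}--\eqref{eq:r_tilde_2} into \eqref{eq:stress_equiv_hyb_T}, applies Green's theorem to obtain $\matr\Pi_{\tmST} (\GRAD\tilde{\vec r}_h^a)|_T=(\sha-\psi_a\omsu)|_T$, inserts this into \eqref{eq:stress_equiv1} to get $(\GRAD\tilde{\vec r}_h^a,\mth)_{\oma}+(\tilde{\vec r}_h^a,\DIV\mth)_{\oma}=0$ for all $\mth\in\tmS$, and only then tests against tensors with normal trace localized on a single face. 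You instead observe immediately that the Lagrange multiplier $\vec l_h^a$ is single-valued on each $F\in\Fh[a]\setminus\Fh^{\rm ext}$, so that \eqref{eq:r_tilde_1} applied from both adjacent elements forces $\vec\Pi_{\vec L_F}\jump{\tilde{\vec r}_h^a}_F=\vec 0$. This shortcut is valid and spares the reader the detour through the variational identity for that part. Your treatment of the interior-vertex constraint \eqref{eq:Mh_prop2int} also matches the paper.

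However, the boundary-vertex case \eqref{eq:Mh_prop2ext} is left open, and the heuristic guiding your sketch is wrong in an important way. You write that you expect to ``identify [the trace of $\vec l_h^a$] with a quantity forced to vanish by the structure of $\vec L_h^a$''; but for $a\in\Vext$ the space $\vec L_h^a$ imposes $\vec l_h=\vec 0$ only on $\partial\oma\setminus\partial\Omega$ and leaves $\vec l_h$ completely free on the faces $F\in\Fh[a]\cap\Fh^{\rm ext}\subset\partial\Omega$. Consequently, nothing in the definition of $\vec L_h^a$ gives the vanishing you need — it is a genuine consequence of the variational problem. Here you cannot avoid the route you skipped for \eqref{eq:Mh_prop1}: plug \eqref{eq:r_tilde_1} and \eqref{eq:r_tilde_2} into \eqref{eq:stress_equiv_hyb_T} (noting $\mt_T\vn[TF]\in\vec L_F$ and $\DIV\mt_T\in\vV[T]$), apply Green's theorem on $T$ to deduce $\matr\Pi_{\tmST}(\GRAD\tilde{\vec r}_h^a)|_T=(\sha-\psi_a\omsu)|_T$, substitute this together with \eqref{eq:r_tilde_2} into \eqref{eq:stress_equiv1} to obtain $(\GRAD\tilde{\vec r}_h^a,\mth)_{\oma}+(\tilde{\vec r}_h^a,\DIV\mth)_{\oma}=0$ for all $\mth\in\tmS$, and then choose $\mth\in\tmS$ supported in the element $T$ adjacent to the external face $F$ with normal trace vanishing on $\partial T\setminus F$. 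Green's theorem turns the identity into $(\tilde{\vec r}_h^a,\mth\vn[TF])_F=0$, and since the normal traces on $F$ of $\tmST$ span $[\IP[p](F)]^d$, this gives exactly \eqref{eq:Mh_prop2ext}. You correctly identify \eqref{eq:stress_equiv_hyb_T} as the right tool, so the plan is salvageable once the source of the vanishing is attributed to the equation rather than to the space $\vec L_h^a$.
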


\begin{proof}
  From $\mathrm{dim}(\vV[T])+3\mathrm{dim}(\vec L_F) = p^2+p+3(2p+2)=p^2+7p+6=\mathrm{dim}(\vM[T])$ we infer that problem \eqref{eq:r_tilde} is well-posed.
  Plugging \eqref{eq:r_tilde_1} and \eqref{eq:r_tilde_2} into \eqref{eq:stress_equiv_hyb_T} yields 
  \begin{equation}
  \matr\Pi_{\tmST} \restrto{(\GRAD\tilde{\vec r}_h^a)}{T}= \restrto{(\sha-\psi_a\omsu)}{T}. \label{eq:r_tilde_3}
  \end{equation}
  Since the formulations \eqref{eq:stress_equiv} and \eqref{eq:stress_equiv_hyb} are equivalent, we can insert \eqref{eq:r_tilde_3} and \eqref{eq:r_tilde_2} into \eqref{eq:stress_equiv1} and obtain
  \begin{equation*}
    (\GRAD\tilde{\vec r}_h^a,\mth)_{\oma}+(\tilde{\vec r}_h^a,\DIV\mth)_{\oma}=0 \quad\forall\mth\in\tmS.
  \end{equation*}
  Choosing a basis function of $\tmS$ having zero normal trace across all edges except one edge $F$ and applying the Green theorem, we see that $\tilde{\vec r}_h^a$ satisfies \eqref{eq:Mh_prop1} for faces $F\in\Fh[a]\setminus\Fh^{\rm ext}$, since the normal components across $F$ of a basis of $\tmST$ span $[\IP[p](F)]^d$.
  If $a\in\Vext$ we can proceed in the same way for $F\in\Fh[a]\cap\Fh^{\rm{ext}}$ to obtain $\eqref{eq:Mh_prop2ext}$. Finally, for $a\in\Vint$ it holds $(\vec r_h^a,\vz)_{\oma}=0$ for any $\vz\in\vec{RM}^d$ by the definition \eqref{eq:space.BDM.V.int} of $\vV[h]^a$, and by \eqref{eq:r_tilde_2} it follows that $\tilde{\vec r}_h^a$ satisfies \eqref{eq:Mh_prop2int}.  We conclude that $\tilde{\vec r}_h^a$ lies in $\vM[h]^a$.
\end{proof}
\begin{proof}[Proof of Theorem~\ref{thm:local_efficiency}]
  We start by proving the local approximation property of the discrete stress reconstruction for any $T\in\Th$
  \begin{equation}
    \label{eq:local_approximation}
    \eta_{{\rm disc},T}^k = \nor{\mshdisc-\omsu}_T\lesssim\eta_{\sharp,\Th[T]}^{k} +\eta_{\mathrm{osc},\Th[T]}^{k}.
  \end{equation}
  We define $\tilde{\vec r}_h^a$ by \eqref{eq:r_tilde}. Then using the fact that $\tilde{\vec r}_h^a\in\vM[h]^a$ by Proposition~\ref{prop:rha} and \cite[Lemma 5.4]{Voh10}, stating that the dual norm on $\vM[h]$ is an upper bound for the $\vec H^1$-seminorm, we obtain
  \begin{equation} \label{eq:etadisc_vs_sup}
   \begin{aligned}
    \nor{\sha-\psi_a\omsu}_{\oma}
    \le \nor{\GRAD\tilde{\vec r}_h^a}_{\oma} 
    \lesssim \sup_{\vec m_h\in\vec M_h^a, \nor{\GRAD\vec m_h} = 1}  (\sha - \psi_a \omsu,\GRAD\vec m_h)_{\oma}.
   \end{aligned}
  \end{equation}
  Now fix $\vec m_h\in\vec M_h^a$ such that $\nor{\GRAD\vec m_h}_{\oma} = 1$. Then, by \eqref{eq:Mh_prop1}, it follows
  \begin{align*}
    (\sha - \psi_a&\omsu,\GRAD\vec m_h)_{\oma} \\
    & = \sum_{T\in\Th[a]} (\sha -\psi_a\omsu,\GRAD\vec m_h)_T \\
    & = \underbrace{-\sum_{T\in\Th[a]} (\DIV\sha-\DIV(\psi_a\omsu,\vec m_h)_T}_{\fedybqe\term_1}
      + \underbrace{\sum_{F\in\Fh[a]} (\jump{\psi_a\omsu\vn[F]},\vec m_h)_F}_{\fedybqe\term_2}.
  \end{align*}
  Using \eqref{eq:stress_equiv2} (which, as in the proof of Lemma~\ref{lem:distinguishing_properties}, is valid for all $\vh\in\vV[h](\oma)$) and the fact that for all $T\in\Th[a]$ and $\mt\in\mS[T]$ it holds $(\DIV\mt,\vec m_h)_T=(\DIV\mt,\Pi_{\vV[T]}\vec m_h)_T$, due to the property $\DIV\mS[T]=\vV[T]$, we can write for the first term
  \begin{align*}
    \term_1 &= -\sum_{T\in\Th[a]}  (-\psi_a\vf + \omsu\vGRAD\psi_a -\DIV(\psi_a\omsu),\vec\Pi_{\vV[T]}\vec m_h)_T \\
   &= -\sum_{T\in\Th[a]} (\psi_a(\vf+\DIV\omsu),\vec\Pi_{\vV[T]}\vec m_h)_T\\
   &= -\sum_{T\in\Th[a]} (\vec\Pi^{p}_T\vf+\DIV\omsu,\psi_a\vec\Pi_{\vV[T]}\vec m_h)_T\\
   &\le \left( \sum_{T\in\Th[a]} h_T^2\nor{\psi_a(\vec\Pi^{p}_T\vf+\DIV(\omsu))}_T^2\right)^{\nicefrac12}  \left(\sum_{T\in\Th[a] }h_T^{-2}\nor{\vec m_h}_T^2\right)^{\nicefrac12} \\
   &\lesssim \left( \sum_{T\in\Th[a]}  h_T^2 \nor{\vec\Pi^{p}_T\vf + \DIV\omsu}_T^2 \nor{\psi_a}_{L^{\infty}(T)}^2\right)^{\nicefrac12}  \nor{\GRAD \vec m_h}_{\oma},
  \end{align*}
  where we used the Cauchy-Schwarz, the discrete Poincar\'e inequality of \cite[Theorem 8.1]{Voh05} together with \eqref{eq:Mh_prop2int} if $a\in\Vint$ and the discrete Friedrichs inequality of \cite[Theorem 5.4]{Voh05} together with \eqref{eq:Mh_prop2ext} if $a\in\Vext$, and $\nor{\psi_a}_{L^{\infty}(T)}=1$. For the second term we proceed in a similar way, using the discrete trace inequality $\nor{\vec m_h}_F\lesssim h_F^{-\nicefrac12}\nor{\vec m_h}_T$, and obtain
  \begin{align*}
   \term_2 &= \sum_{F\in\Fh[a]^{{\rm int}}}  (\psi_a\jump{\omsu\vn[F]},\vec m_h)_F 
   \\
   &\le \left( \sum_{F\in\Fh[a]^{{\rm int}}}  h_F \nor{\psi_a\jump{\omsu\vn[F]}}_F^2\right)^{\nicefrac12}\left( \sum_{F\in\Fh[a]^{{\rm int}}}  h_F^{-1}\nor{\vec m_h}_F^2\right)^{\nicefrac12} 
   \\
   &\lesssim \left(  \sum_{F\in\Fh[a]^{{\rm int}}}  h_F \nor{\jump{\omsu\vn[F]}}_F^2 \right)^{\nicefrac12} \nor{\GRAD\vec m_h}_{\oma}.
  \end{align*}
  Inserting these results into \eqref{eq:etadisc_vs_sup} yields \eqref{eq:local_approximation}.
  
  From the local stopping criteria \eqref{eq:crit_local}, the definition of the discretization error estimator \eqref{eq:eta_disc} and the local approximation property \eqref{eq:local_approximation} it follows that 
  $$\eta_{{\rm disc},T}^{k}+\eta_{{\rm lin},T}^{k}+\eta_{{\rm quad},T}^{k}  \lesssim \eta_{\mathrm{disc},T}^{k} = \nor{\mshdisc -\omsu}_T   \lesssim \eta_{\sharp,\Th[T]}^{k}+\eta_{\mathrm{osc},\Th[T]}^{k}.$$
  Then \eqref{eq:Verfuerth} yields the result. 
\end{proof}

\begin{theorem}[Global efficiency]
Let $\vu\in\Hozd$ be the solution of \eqref{eq:weak}, $\uhk\in\Hozd\cap[\IP[p](\Th)]^d$ be arbitrary and $\mshdisc$ and $\mshlin$ defined by Constructions~\ref{const:disc} and \ref{const:lin}. Let the stopping criteria \eqref{eq:crit_quad} and \eqref{eq:crit_lin} be verified. Then it holds
\begin{equation}\label{eq:global_efficiency}
 \eta_{{\rm disc}}^{k}+\eta_{{\rm lin}}^{k} +\eta_{{\rm quad}}^{k} +\eta_{{\rm osc}}^{k}  \lesssim C_{\rm Lip}C_{\rm mon}^{-1}\nor{\vu-\uhk}_{\rm{en}}
+\eta_{\flat}^{k} +\eta_{\mathrm{osc}}^{k}.
\end{equation}
\end{theorem}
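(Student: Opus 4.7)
The plan is to adapt the strategy used for local efficiency (Theorem~\ref{thm:local_efficiency}) to the global setting, where only the global stopping criteria \eqref{eq:crit_quad} and \eqref{eq:crit_lin} are at our disposal. First, I would exploit these two criteria to absorb the linearization and quadrature estimators into the discretization and oscillation estimators. From \eqref{eq:crit_lin} one has $\eta_{\mathrm{lin}}^{k}\le\gamma_{\mathrm{lin}}(\eta_{\mathrm{disc}}^{k}+\eta_{\mathrm{osc}}^{k})$, and plugging this into \eqref{eq:crit_quad} gives
\begin{equation*}
\eta_{\mathrm{quad}}^{k}\le\gamma_{\mathrm{quad}}(1+\gamma_{\mathrm{lin}})\bigl(\eta_{\mathrm{disc}}^{k}+\eta_{\mathrm{osc}}^{k}\bigr),
\end{equation*}
so the entire left-hand side of \eqref{eq:global_efficiency} is bounded, up to a constant depending only on $\gamma_{\mathrm{lin}}$ and $\gamma_{\mathrm{quad}}$, by $\eta_{\mathrm{disc}}^{k}+\eta_{\mathrm{osc}}^{k}$. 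It therefore suffices to control $\eta_{\mathrm{disc}}^{k}$ by the right-hand side of \eqref{eq:global_efficiency}.

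Next I would recycle the local approximation property \eqref{eq:local_approximation} established during the proof of Theorem~\ref{thm:local_efficiency}, namely $\eta_{\mathrm{disc},T}^{k}\lesssim \eta_{\sharp,\Th[T]}^{k}+\eta_{\mathrm{osc},\Th[T]}^{k}$ for each $T\in\Th$. Note that this bound does \emph{not} rely on any stopping criterion, so it is available unconditionally. Squaring, summing over $T\in\Th$ and using the definition \eqref{eq:eta_glob} yields
\begin{equation*}
(\eta_{\mathrm{disc}}^{k})^{2}\lesssim\sum_{T\in\Th}\bigl((\eta_{\sharp,\Th[T]}^{k})^{2}+(\eta_{\mathrm{osc},\Th[T]}^{k})^{2}\bigr),
\end{equation*}
and by the shape regularity of the mesh family (each element belongs to a uniformly bounded number of patches $\Th[T']$), the right-hand side is in turn bounded by $(\eta_{\sharp}^{k})^{2}+(\eta_{\mathrm{osc}}^{k})^{2}$, with $\eta_{\sharp}^{k}\eqbydef\bigl(\sum_{T\in\Th}(\eta_{\sharp,T}^{k})^{2}\bigr)^{\nicefrac12}$.

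Finally, I would square the residual-type estimate \eqref{eq:Verfuerth} on each $T\in\Th$, sum over the mesh, and again invoke the finite-overlap property of the patches $\Th[T]$ to turn sums of $\|\vu-\uhk\|_{\mathrm{en},\Th[T]}^{2}$, $(\eta_{\flat,\Th[T]}^{k})^{2}$ and $(\eta_{\mathrm{osc},\Th[T]}^{k})^{2}$ into $\|\vu-\uhk\|_{\mathrm{en}}^{2}$, $(\eta_{\flat}^{k})^{2}$ and $(\eta_{\mathrm{osc}}^{k})^{2}$. Chaining this with the previous step gives
\begin{equation*}
\eta_{\mathrm{disc}}^{k}\lesssim C_{\mathrm{Lip}}C_{\mathrm{mon}}^{-1}\,\nor{\vu-\uhk}_{\mathrm{en}}+\eta_{\flat}^{k}+\eta_{\mathrm{osc}}^{k},
\end{equation*}
which, combined with the first step of the argument, yields \eqref{eq:global_efficiency}. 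The only nontrivial ingredient is the bounded overlap bookkeeping when passing from patch-wise sums to global sums; everything else is a direct transcription of the local argument.
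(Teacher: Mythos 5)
Your argument is correct and follows the same route as the paper: absorb $\eta_{\rm lin}^k$ and $\eta_{\rm quad}^k$ via the global stopping criteria, invoke the (unconditional) local approximation property \eqref{eq:local_approximation} and the residual bound \eqref{eq:Verfuerth}, then pass from patch-wise to global quantities using the bounded overlap of the patches $\Th[T]$. The paper's version is more terse, merely stating ``proceeding as above,'' but the steps you spell out---including the observation that \eqref{eq:local_approximation} does not depend on the stopping criteria and the squaring/summing bookkeeping---are exactly the ones being elided.
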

\begin{proof}
Proceeding as above, using the global stopping criteria \eqref{eq:crit_quad} and \eqref{eq:crit_lin}, and owing to \eqref{eq:local_approximation} we obtain 
\begin{align}
  \eta_{{\rm disc}}^{k}+\eta_{{\rm lin}}^{k} +\eta_{{\rm quad}}^{k} +\eta_{{\rm osc}}^{k}  
  \lesssim  \eta_{{\rm disc}}^{k}+\eta_{{\rm osc}}^{k}  
  \lesssim\Big( 2\sum_{T\in\Th}(\eta_{\sharp,\Th[T]}^k+\eta_{{\rm osc},\Th[T]}^k)^2\Big)^{\nicefrac12} + \eta_{\rm osc}^k 
  \lesssim \eta_{\sharp}^k+\eta_{{\rm osc}}^k.
\end{align}
Then, using again \eqref{eq:Verfuerth} we obtain the result.
\end{proof}


\section{Numerical results} \label{sec:tests}
In this section we illustrate numerically our results on two test cases, both performed with the {\tt Code\_Aster}\footnote{\url{http://web-code-aster.org}} software, which uses conforming finite elements of degree $p=2$.
Our intention is, first, to show the relevance of the discretization error estimators used as mesh refinement indicators,  and second, to propose a stopping criterion for the Newton iterations based on the linearization error estimator.
All the triangulations are conforming, since in the remeshing progress hanging nodes are removed by bisecting the neighboring element.

\subsection{L-shaped domain}

\begin{figure}[t]
 \includegraphics[width=0.32\textwidth]{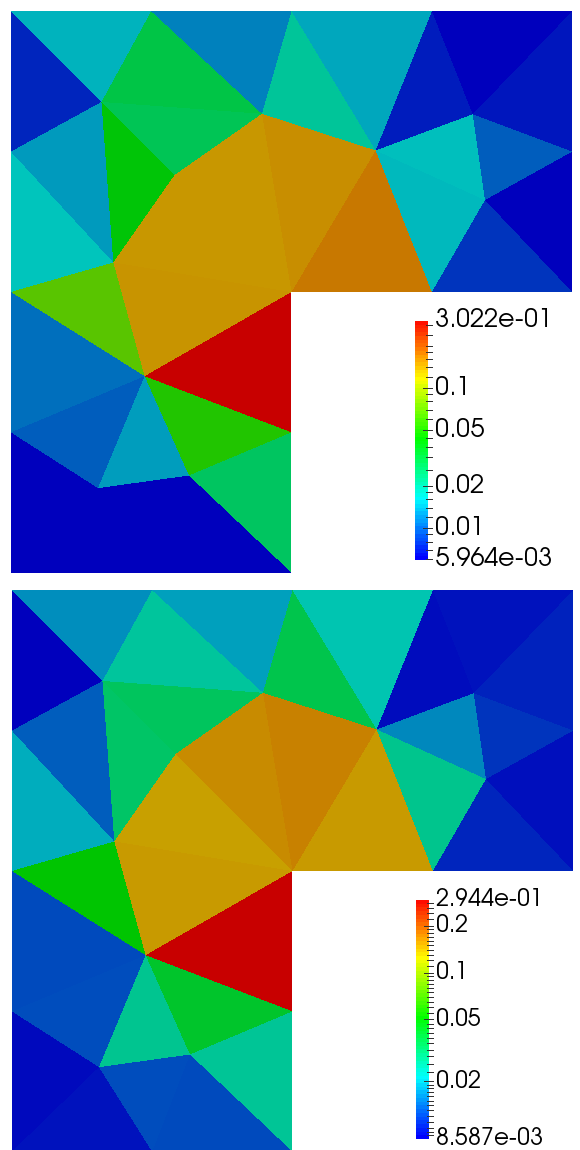}
 \includegraphics[width=0.32\textwidth]{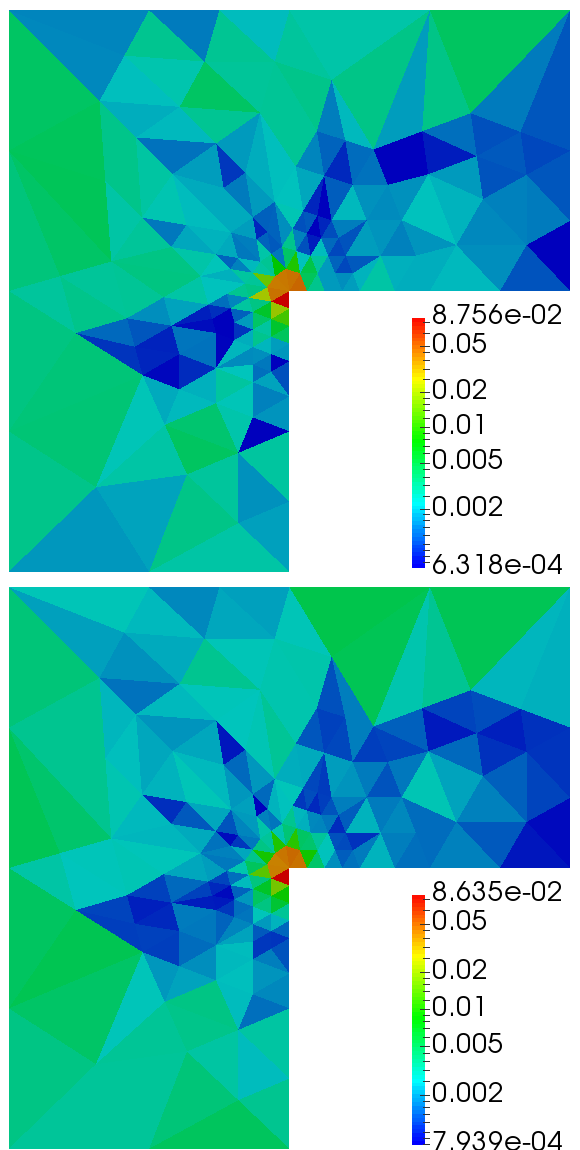}
 \includegraphics[width=0.32\textwidth]{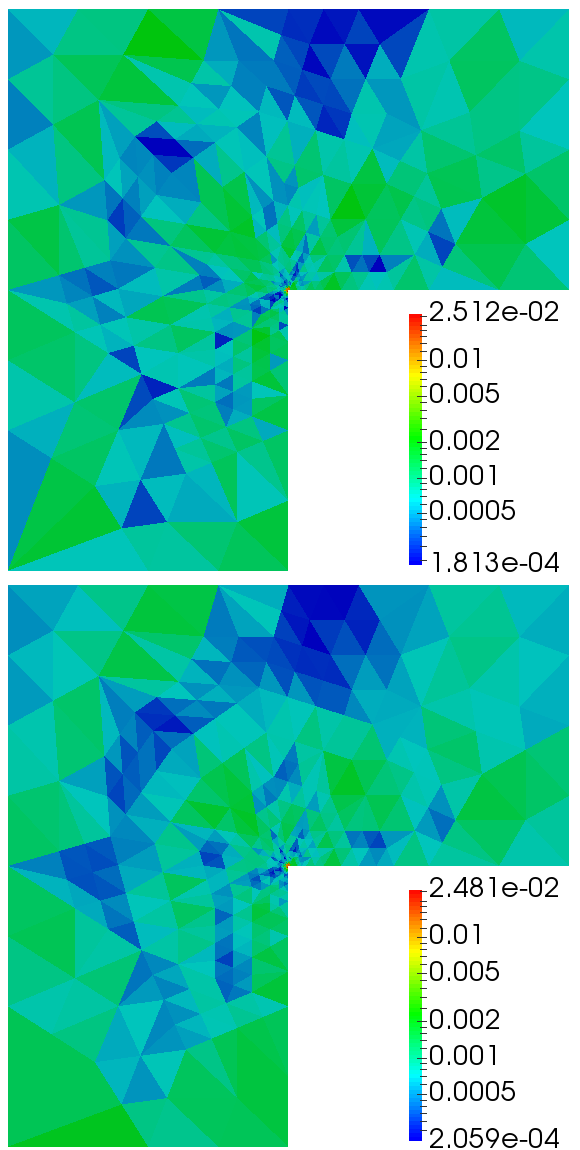}
 \caption{L-shaped domain with linear elasticity model. Distribution of the error estimators (top) and the analytical error (bottom) for the initial mesh (left) and after three (middle) and six (right) adaptive mesh refinements.}
 \label{fig:L_lin_distribution}
\end{figure}

Following \cite{Kim11,NicWitWoh08,AinRan10}, we consider the L-shaped domain $\Omega = (-1,1)^2\setminus([0,1]\times[-1,0])$
, where for the linear elasticity case an analytical solution is given by 
\begin{equation*}
  \vu(r,\theta) = \frac1{2\mu} r^{\alpha} \begin{pmatrix} \cos(\alpha\theta)-\cos((\alpha-2)\theta)\\ A\sin(\alpha\theta)+\sin((\alpha-2)\theta) \end{pmatrix},
\end{equation*}
with the parameters
$$ \mu = 1.0, \quad \lambda = 5.0, \quad \alpha = 0.6, \quad A = 31/9. $$
This solution is imposed as Dirichlet boundary condition on $\del\Omega$, together with $\vf = \vec0$ in $\Omega$. We perform this test for two different stress-strain relations. First on the linear elasticity model \eqref{eq:LinCauchy}, where we can compare the error estimate \eqref{eq:estimate_basic} to the analytical error $\nor{\vu-\uh}_{\rm{en}}$. The second relation is the nonlinear Hencky--Mises model \eqref{eq:HenckyMises}, for which we distinguish the discretization and linearization error components and use the adaptive algorithm from Section~\ref{sec:algorithm}.

\subsubsection{Linear elasticity model}

We compute the analytical error and its estimate on two series of unstructered meshes. Starting with the same initial mesh, we use uniform mesh refinement for the first one and adaptive refinement based on the error estimate for the second series. 

\begin{figure}[t]
 \centering
  \raisebox{-.55\height}{
 \begin{tikzpicture}
 \begin{axis}[
       xlabel = {$\vert\Th\vert = $ number of mesh elements }, 
       legend entries = {{error, unif.},{estimate, unif.},{error, adap.},{estimate, adap.}},
       legend pos = south west,
       xmode=log,
       ymode=log,
      ]
   \addplot [Sepia,mark = *] table [x = nbmai, y = err] {tests/L-shape/L-shape_ana_uni.dat};
   \addplot [RedOrange,mark = triangle*] table [x = nbmai, y = est] {tests/L-shape/L-shape_ana_uni.dat};
   \addplot [blue,mark = square*] table [x = nbmai, y = err] {tests/L-shape/L-shape_ana.dat};
   \addplot [LimeGreen,mark = diamond*] table [x = nbmai, y = est] {tests/L-shape/L-shape_ana.dat};
 \end{axis}
 \end{tikzpicture}}
 \quad
\begin{tabular}{|c|c|}
 \hline
 $\vert\Th\vert$  & $I_{\rm{eff}}$\\
 \hline
34&		1.00\\
84&		1.01\\
\rowcolor[gray]{0.9} 130&	1.02\\ 
137&	1.01\\
\rowcolor[gray]{0.9} 214&	1.05\\ 
239&	1.01\\
293&	1.00\\
429&	1.01\\
\rowcolor[gray]{0.9} 524&	1.01\\ 
601&	1.01\\
801&	1.01\\
1099&	1.02\\
\rowcolor[gray]{0.9} 1142&	1.02\\ 
 \hline
\end{tabular}
\caption{L-shaped domain with linear elasticity model. \emph{Left:} Comparison of the error estimate \eqref{eq:estimate_basic} and $\nor{\vu-\uh}_{\rm{en}}$ on two series of meshes, obtained by uniform and adaptive remeshing. \emph{Right:} Effectivity indices of the estimate for each mesh, with the meshes stemming from uniform refinement highlighted in gray.}
 \label{fig:L_lin_comp}
\end{figure}
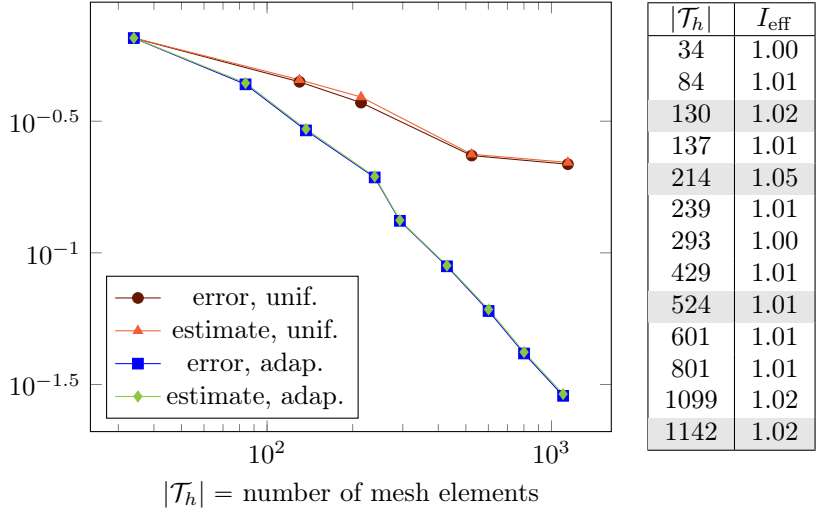

Figure~\ref{fig:L_lin_distribution} compares the distribution of the error and the estimators on the initial and two adaptively refined meshes. The error estimators reflect the distribution of the analytical error, which makes them a good indicator for adaptive remeshing.
Figure~\ref{fig:L_lin_comp} shows the global estimates and errors for each mesh, as well as their effectivity index corresponding to the ratio of the estimate to the error. We obtain effectivity indices close to one, showing that the estimated error value lies close to the actual one, what we can also observe in the graphics on the left.
As expected, the adaptively refined mesh series has a higher convergence rate, with corresponding error an order of magnitude lower for $10^3$ elements.

\subsubsection{Hencky--Mises model}\label{sec:L.HM} 

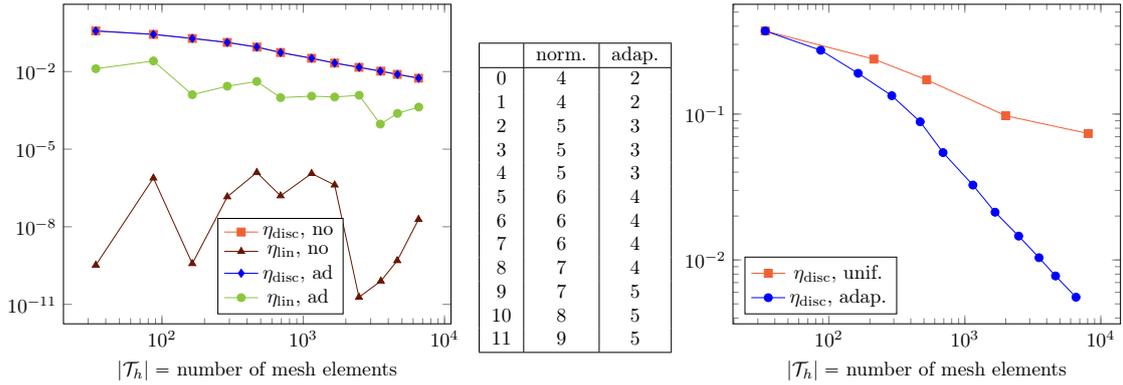
\begin{figure}[t]
 \centering
\resizebox {\columnwidth} {!} {
 \begin{tikzpicture}
 \begin{axis}[
       xlabel = {$\vert\Th\vert = $ number of mesh elements }, 
       legend entries = {{$\eta_{\rm{disc}}$, no},{$\eta_{\rm{lin}}$, no},{$\eta_{\rm{disc}}$, ad},{$\eta_{\rm{lin}}$, ad}},
       legend style={at={(0.56,0.03)},anchor=south},
       xmode=log,
       ymode=log,
      ]
   \addplot [RedOrange ,mark = square*] table [x = nbmare, y = eta_sp_resi] {tests/L-shape/L-shape_HM.dat};
   \addplot [Sepia,mark = triangle*] table [x = nbmare, y = eta_ln_resi] {tests/L-shape/L-shape_HM.dat};
   \addplot [blue,mark = diamond*] table [x = nbmari, y = eta_sp_rita] {tests/L-shape/L-shape_HM.dat};
   \addplot [LimeGreen,mark = *] table [x = nbmari, y = eta_ln_rita] {tests/L-shape/L-shape_HM.dat};
 \end{axis}
 \end{tikzpicture}
  \raisebox{1.15\height}{
\begin{tabular}{|c|c|c|}
 \hline
 & norm. & adap. \\
 \hline
0 & 4 & 2 \\
1 & 4 & 2 \\
2 & 5 & 3 \\
3 & 5 & 3 \\
4 & 5 & 3 \\
5 & 6 & 4 \\
6 & 6 & 4 \\
7 & 6 & 4 \\
8 & 7 & 4 \\
9 & 7 & 5 \\
10 & 8 & 5 \\
11 & 9 & 5 \\
 \hline
\end{tabular}}
 \begin{tikzpicture}
 \begin{axis}[
       xlabel = {$\vert\Th\vert = $ number of mesh elements }, 
       legend entries = {{$\eta_{\rm{disc}}$, unif.},{$\eta_{\rm{disc}}$, adap.}},
       legend pos = south west,
       xmode=log,
       ymode=log,
      ]
   \addplot [RedOrange ,mark = square*] table [x = nbmaun, y = eta_sp_unif] {tests/L-shape/L-shape_HM_remaillage.dat};
   \addplot [blue,mark = *] table [x = nbmaad, y = eta_sp_adap] {tests/L-shape/L-shape_HM_remaillage.dat};
 \end{axis}
 \end{tikzpicture}
}
\caption{L-shaped domain with Hencky--Mises model. \emph{Left:} Comparison of the global discretization and linearization error estimators on a series of meshes, without and with adaptive stopping criterion for the Newton algorithm. \emph{Middle:} Number of Newton iterations without and with adaptive stopping criterion for each mesh. \emph{Right:} Discretization error estimate for uniform and adaptive remeshing.}
 \label{fig:L_HM_comp}
\end{figure}

For the Hencky--Mises model we choose the Lam\'e functions
$$ \tilde{\mu}(\rho) \eqbydef a + b(1+\rho^2)^{\nicefrac{-1}2}, \quad \tilde{\lambda}(\rho) \eqbydef \kappa-\frac32 \tilde{\mu}(\rho),$$
corresponding to the Carreau law for elastoplastic materials (see, e.g.~\cite{GatMarRud13,LouGue90,San93}), and we set $a=1/20$, $b=1/2$, and $\kappa=17/3$ so that the shear modulus reduces progressively to approximately $10\%$ of its initial value. This model allows us to soften the singularity observed in the linear case and to validate our results on more homogeneous error distributions. We apply Algorithm~\ref{algo:algo} with $\gamma_{\rm{lin}} = 0.1$ and compare the obtained results to those without the adaptive stopping criterion for the Newton solver. In both cases, we use adaptive remeshing based on the spatial error estimators. 

The results are shown in Figure~\ref{fig:L_HM_comp}. In the left graphic we observe that the linearization error estimate in the adaptive case is much higher than in the one without adaptive stopping criterion. We see that this does not affect the discretization error estimator. The table shows the number of performed Newton iterations for both cases. The algorithm using the adaptive stopping criterion is more efficient. In the right graphic we compare the global distretization error estimate on two series of meshes, one refined uniformly and the other one adaptively, based on the local discretization error estimators. Again the convergence rate is higher for the adaptively refined mesh series.

\subsection{Notched specimen plate}
In our second test we use the two nonlinear models of Examples~\ref{ex:hencky} and~\ref{ex:damage} on a more application-oriented test. The idea is to set a special sample geometry yielding to a model discrimination test, namely different physical results for different models. 
We simulate the uniform traction of a notched specimen under plain strain assumption (cf. Figure~\ref{fig:plaque}). The notch is meant to favor strain localization phenomenon. We consider a domain $\Omega = (0,10{\rm m})\times(-10{\rm m},10{\rm m}) \setminus \{\vx\in\IR^2~\vert~ \nor{\vx{\rm m}- (0,11{\rm m})^T}\le2{\rm m}\}$, we take $\vf=\vec0$, and we prescribe a displacement on the boundary leading to the following Dirichlet conditions:
$$ 
u_x=0{\rm m} \text{~ if  } x = 0{\rm m}, \quad u_y=-1.1\cdot10^{-3}{\rm m}\text{~ if } y = -10{\rm m}, \quad u_y=1.1\cdot10^{-3}{\rm m} \text{~ if } y = 10{\rm m}.
$$ 
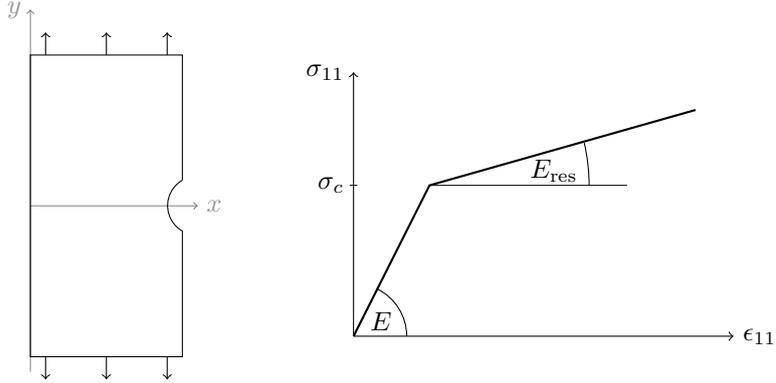
\begin{figure}[t]
\centering
  \raisebox{-.07\height}{
\begin{tikzpicture}[scale=0.2]
\draw[gray,->,very thin] (0,0) -- (11,0) node [right]{\textcolor{Gray}{$x$}};
\draw[gray,->,very thin] (0,-11) -- (0,13) node [left]{\textcolor{Gray}{$y$}};

\draw (10,1.7) -- (10,10) -- (0,10) -- (0,-10) -- (10,-10) -- (10,-1.7);
\draw (10,1.7) arc (119.97:240.93:1.945cm);

\draw[->] (1,-10) -- (1,-11.5);
\draw[->] (5,-10) -- (5,-11.5);
\draw[->] (9,-10) -- (9,-11.5);

\draw[->] (1,10) -- (1,11.5);
\draw[->] (5,10) -- (5,11.5);
\draw[->] (9,10) -- (9,11.5);
\end{tikzpicture}}
\hskip5ex
\begin{tikzpicture}[scale=1]
\draw[->] (0,0) -- (5,0) node [right]{$\epsilon_{11}$};
\draw[->] (0,0) -- (0,3.5) node [left]{$\sigma_{11}$};

\draw[thick] (0,0) -- (1,2) -- (4.5,3);
\draw[thin] (-0.05,2) -- (0.05,2);
\node[left] at (0,2){$\sigma_c$};
\node[right] at (0.1,0.2){$E$};
\draw[thin] (0.7,0) arc (0:64:0.7);
\draw[very thin] (1,2) -- (3.6,2);
\node[right] at (2.2,2.2){$E_{\rm{res}}$};
\draw[thin] (3.1,2) arc (0:13:2.5);
\end{tikzpicture}
\caption{\emph{Left:} Notched specimen plate. \emph{Right:} Uniaxial traction curve}
\label{fig:plaque}
\end{figure}%
In many applications, the information about the material properties are obtained in uniaxial experiments, yielding a relation between $\sigma_{ii}$ and $\epsilon_{ii}$ for a space direction $x_i$. Since we only consider isotropic materials, we can choose $i=1$. From this curve one can compute the nonlinear Lam\'e functions of \eqref{eq:HenckyMises} and the damage function in \eqref{eq:Damage}. Although the uniaxial relation is the same, the resulting stress-strain relations will be different.
In our test, we use the $\sigma_{11}$\,--\,$\epsilon_{11}$\,--\,relation indicated in the right of Figure~\ref{fig:plaque} with $$\sigma_c =3\cdot10^4\rm{Pa},  \quad E = \frac{\mu(3\lambda+2\mu)}{\lambda+\mu}=3\cdot10^8\rm{Pa}, \quad E_{\rm{res}} = 3\cdot10^7\rm{Pa},$$ corresponding to the Lam\'e parameters $\mu = \frac{3}{26}\cdot10^9\rm{Pa}$ and $\lambda =\frac9{52}\cdot10^9\rm{Pa}$.
\begin{figure}[t]
\resizebox {\columnwidth} {!} {
 \includegraphics[height=0.3\textheight]{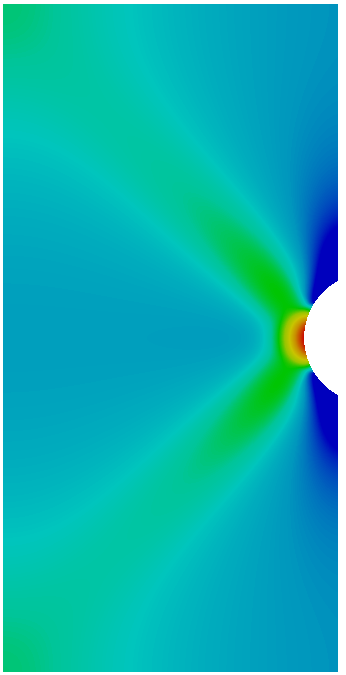}
 \includegraphics[height=0.3\textheight]{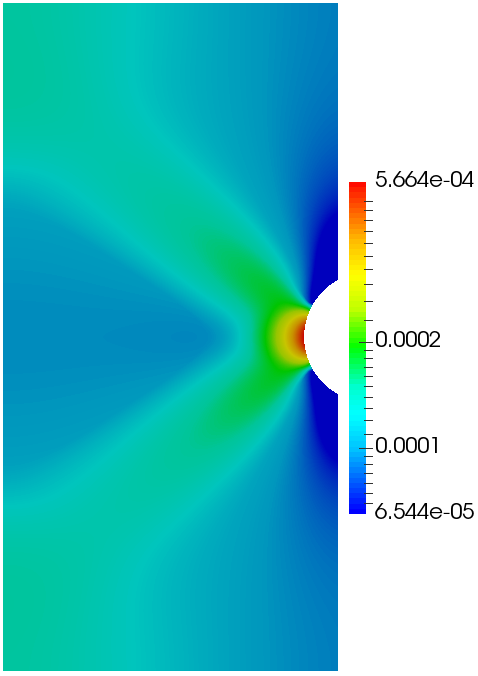}
\hspace{0.02\columnwidth}
 \includegraphics[height=0.3\textheight]{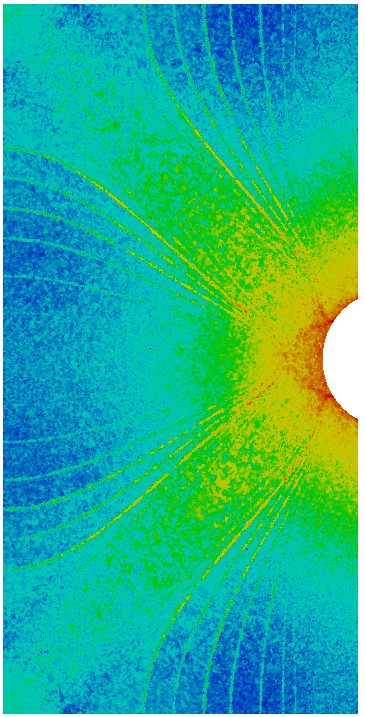}
 \includegraphics[height=0.3\textheight]{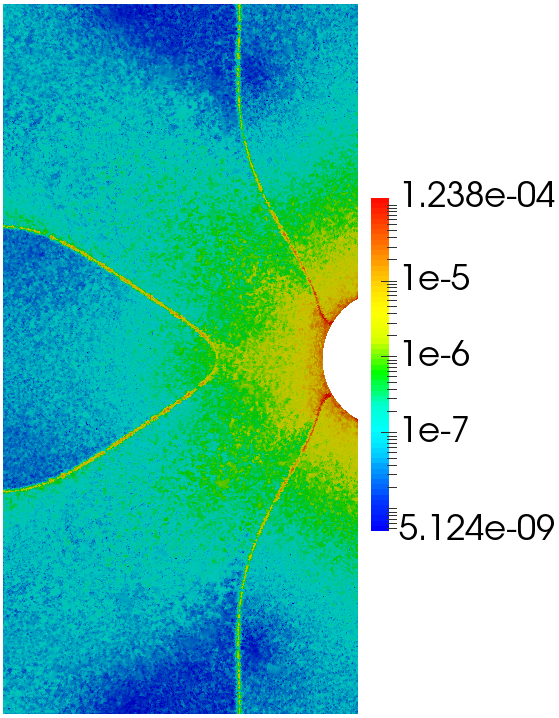}
}\\
\resizebox {\columnwidth} {!} {
 \includegraphics[height=0.3\textheight]{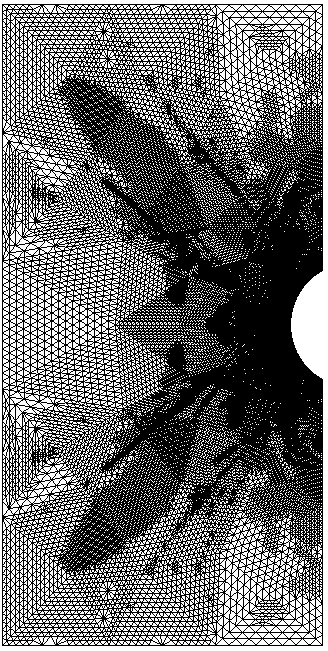}
 \includegraphics[height=0.3\textheight]{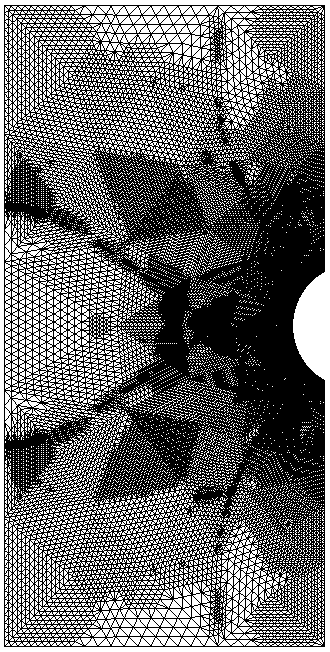}
 \includegraphics[height=0.13\textheight]{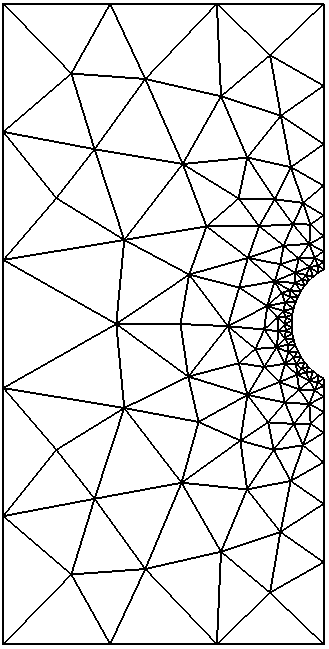}
\hspace{0.003\columnwidth}
 \includegraphics[height=0.3\textheight]{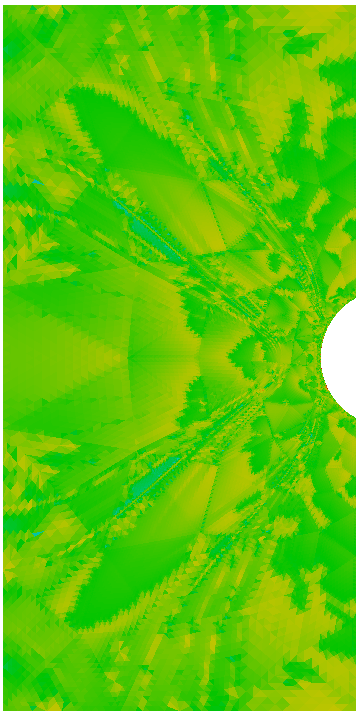}
 \includegraphics[height=0.3\textheight]{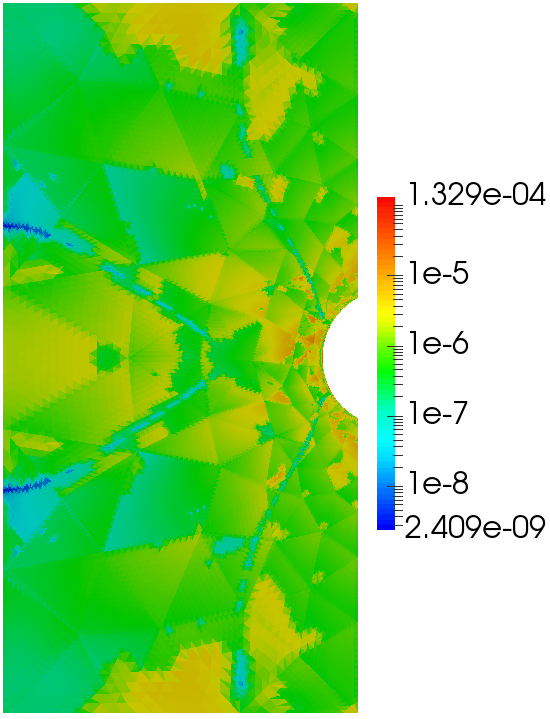}
}
 \caption{Notched specimen plate, comparison between Hencky--Mises (left in each picture) and damage model (right). \emph{Top left:} $\optr(\GRADs\uh)$. \emph{Top right:} $\eta_{\rm{disc}}$ on a fine mesh (no adaptive refinement). \emph{Bottom left:} meshes after six adaptive refinements. \emph{Bottom middle:} initial mesh. \emph{Bottom right:} $\eta_{\rm{disc}}$ on the adaptively refined meshes.}
 \label{fig:plate_img}
\end{figure}%
For both stress-strain relations we apply Algorithm~\ref{algo:algo} with $\gamma_{\rm{lin}}=0.1$. We first compare the results to a computation on a very fine mesh to evaluate the remeshing based on the discretization error estimators. Secondly, we perform adaptive remeshing based on these estimators but without applying the adaptive stopping of the Newton iterations and compare the two series of meshes. 
As in Section~\ref{sec:L.HM}, we verify if the reduced number of iterations impacts the discretization error.

Figure~\ref{fig:plate_img} shows the result of the first part of the test. In each of the four images the left specimen corresponds to the Hencky--Mises and the right to the isotropic damage model. 
To illustrate the difference of the two models, the top left picture shows the trace of the strain tensor. This scalar value is a good indicator for both models, representing locally the relative volume increase which could correspond to either a damage or shear band localization zone. 
In the top right picture we see the distribution of the discretization error estimators in the reference computation (209,375 elements), whereas the distribution of the estimators on the sixth adaptively refined mesh is shown in the bottom right picture (60,618 elements for Hencky--Mises, 55,718 elements for the damage model). 
The corresponding meshes and the initial mesh for the adaptive algorithm are displayed in the bottom left of the figure. 
To ensure a good discretization of the notch after repeated mesh refinement, the initial mesh cannot be too corse in this curved area. 
We observe that the adaptively refined meshes match the distribution of the discretization error estimators on the uniform mesh, and that the estimators are more evenly distributed on these meshes.

The results of the second part of the test are illustrated in Figure~\ref{fig:plaque_Newton}. As for the L-shape test, we observe that the reduced number of Newton iterations does not affect the discretization error estimate, nor the overall error estimate which is dominated by the discretization error estimate if the Newton algorithm is stopped.

\begin{figure}[t]
 \centering
\resizebox {\columnwidth} {!} {
 \begin{tikzpicture}
 \begin{axis}[
       xlabel = {$\vert\Th\vert = $ number of mesh elements }, 
       xmode=log,
       ymode=log,
      ]
   \addplot [RedOrange ,mark = square*] table [x = nbmare, y = eta_disc_resi] {tests/plate/HvM_comp.dat};
   \addplot [Sepia,mark = triangle*] table [x = nbmare, y = eta_ln_resi] {tests/plate/HvM_comp.dat};
   \addplot [blue,mark = diamond*] table [x = nbmari, y = eta_disc_rita] {tests/plate/HvM_comp.dat};
   \addplot [LimeGreen,mark = *] table [x = nbmari, y = eta_ln_rita] {tests/plate/HvM_comp.dat};
 \end{axis}
 \end{tikzpicture}
 \begin{tikzpicture}
 \begin{axis}[
       xlabel = {$\vert\Th\vert = $ number of mesh elements }, 
       legend entries = {{$\eta_{\rm{disc}}$,no},{$\eta_{\rm{lin}}$,no},{$\eta_{\rm{disc}}$, ad},{$\eta_{\rm{lin}}$, ad}},
       legend style={at={(0.03,0.53)},anchor=west},
       xmode=log,
       ymode=log,
      ]
   \addplot [RedOrange ,mark = square*] table [x = nbmare, y = eta_disc_resi] {tests/plate/Endo_comp.dat};
   \addplot [Sepia,mark = triangle*] table [x = nbmare, y = eta_ln_resi] {tests/plate/Endo_comp.dat};
   \addplot [blue,mark = diamond*] table [x = nbmari, y = eta_disc_rita] {tests/plate/Endo_comp.dat};
   \addplot [LimeGreen,mark = *] table [x = nbmari, y = eta_ln_rita] {tests/plate/Endo_comp.dat};
 \end{axis}
 \end{tikzpicture}
  \raisebox{2\height}{
\begin{tabular}{|c|c|c|c|c|}
 \hline
 & \multicolumn{2}{c|}{Hencky--Mises}  & \multicolumn{2}{c|}{Damage} \\
 \hline
 &  nor. &  adap. &  nor. & adap. \\
 \hline
0 & 5 & 3 & 6 & 4\\
1 & 5 & 4 & 6 & 4 \\
2 & 6 & 4 & 6 & 5 \\
3 & 6 & 5 & 7 & 5 \\
4 & 6 & 5 & 7 & 6 \\
5 & 7 & 5 & 8 & 6 \\
6 & 6 & 5 & 8 & 6 \\
 \hline
\end{tabular}}
}
\caption{Notched specimen plate. Comparison of the global discretization and linearization error estimators without and with adaptive stopping criterion for the Hencky--Mises model (left) and the damage model (middle), and comparison of the number of perfomed Newton iterations (right).}
\label{fig:plaque_Newton}
\end{figure}
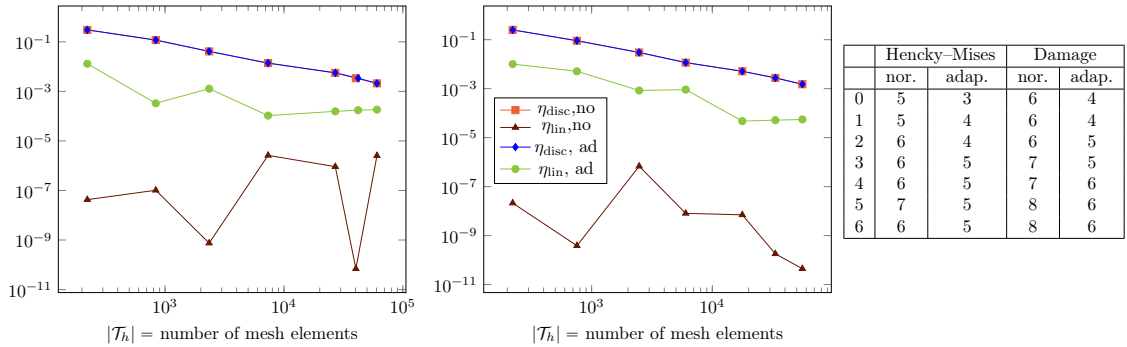


\section{Conclusions}
In this work we have developed an a posteriori error estimate for a wide class of hyperelastic problems. 
The estimate is based on stress tensor reconstructions and thus independent of the stress-strain relation, except for two constants. 
In a finite element software providing different mechanical behavior laws it can be directly applied to any of these laws.
The assumptions we make on the stress-strain relation are only used to obtain the equivalence of the energy norm and the dual norm of the residual of the weak formulation. 
Using the latter as error measure, the method can be applied to a wider range of behavior laws.
Exploring both numerical tests, we have promising results for general plasticity and damage models.
These results come at the price of solving local mixed finite element problems at each iteration of the linearization solver. 
In practice, the corresponding saddle point problems can be transformed into symmetric positive definite systems using the spaces of Section~\ref{sec:efficiency}. Furthermore, these matrices (or their decomposition) can be computed once in a preprocessing stage, and only need to be recomputed if one or more elements in the patch have changed due to remeshing.

\section*{Acknowledgements}
The authors thank Wietse Boon for interesting discussions about the Arnold--Falk--Winther spaces during the IHP quarter on Numerical Methods for PDEs in Paris. The authors also thank Kyrylo Kazymzrenko for providing his expertise on solid mechanics and for his help for designing the numerical test cases. The work of M. Botti was partially supported by Labex NUMEV (ANR-10-LABX-20) ref. 2014-2-006 and by the Bureau de Recherches G\'{e}ologiques et Mini\`{e}res.
\begin{footnotesize}
  \bibliographystyle{plain}
  \bibliography{neh_a_post}
\end{footnotesize}

\end{document}